\newtheorem{theorem}{Theorem}[section]
\newtheorem{proposition}{Proposition}
\theoremstyle{definition}
\newtheorem{definition}[theorem]{Definition}
\title[nonautonomous stability of GLMs] %Use the shortened version of the full title
      {Underlying one-step methods and nonautonomous stability of general linear methods}
\author[Andrew J. Steyer and Erik S. Van Vleck]{}
\subjclass{Primary: 65L05, 65L06; Secondary: 65L07, 65P40.}
 \keywords{general linear method, underlying one-step method, nonautonomous, Lyapunov exponents, Lyapunov exponent, Sacker-Sell spectrum, }
 \email{asteyer@sandia.gov}
 \email{erikvv@ku.edu}
\thanks{This research was supported in part by NSF grant DMS-1419047.}
\thanks{$^*$ Corresponding author: Andrew J. Steyer}
\begin{document}
\maketitle

% Enter the first author's name and address:
\centerline{\scshape Andrew J. Steyer$^*$}
\medskip
{\footnotesize
% please put the address of the first author
 \centerline{Sandia National Laboratories}
 \centerline{P.O. Box 5800, MS 1320}
  \centerline{Albuquerque, NM 87185-1320, USA }
 %  \centerline{Other lines}
   %\centerline{ Springfield, MO 65801-2604, USA}
} % Do not forget to end the {\footnotesize by the sign }

\medskip

\centerline{\scshape Erik S. Van Vleck}
\medskip
{\footnotesize
 % please put the address of the second  and third author
\centerline{Department of Mathematics - University of Kansas}
 \centerline{1460 Jayhawk Blvd}
 \centerline{ Lawrence, KS 66045-7594, USA}
}

\bigskip

% The name of the associate editor will be entered by an editorial staff
% "Communicated by the associate editor name" is not needed for special issue.
 \centerline{\it Dedicated to the memory of Timo Eirola.}

%The abstract of your paper
\begin{abstract}
We generalize the theory of underlying one-step methods to strictly stable general linear methods (GLMs) solving nonautonomous ordinary differential equations (ODEs) that satisfy a global Lipschitz condition.  We combine this theory with the Lyapunov and Sacker-Sell spectral stability theory for one-step methods developed in \cite{steyerthesis,SVV2016,SVV1} to analyze the stability of a strictly stable GLM solving a nonautonomous linear ODE.  These results are applied to develop a stability diagnostic for the solution of nonautonomous linear ODEs by strictly stable GLMs.
\end{abstract}

\section{Introduction}\label{sec:introduction}

'What do multistep methods approximate?' This question is one that beleaguers many researchers of multistep discretizations of ordinary differential equation (ODE) initial value problems (IVPs) due to the fact that the local truncation error of a $k$-step multistep method depends on the previous $k$ steps.  For time-independent (autonomous) ODEs, two classic papers, \cite{K1986} and \cite{EN1988}, provide an answer to this question by applying invariant manifold theory for maps to relate the numerical solution produced by a multistep method to the flow of the differential equation it is approximating.  The focus of this paper is to use the spirit and technique of \cite{K1986} and \cite{EN1988} together with invariant manifold theory for time-dependent (nonautonomous) difference equations to develop a stability theory for general linear methods (GLMs) solving nonautonomous linear ODEs.

Our contribution in this work is twofold.  We first apply invariant manifold theory for nonautonomous difference equations to prove Theorem \ref{thm:maintheorem1} which applies to strictly stable GLMs solving a nonautonomous ODE that satisfies a global Lipschitz condition in its state variables.  This theorem states that for sufficiently small step-sizes there exists a time-independent linear change of coordinates and a unique continuous function whose graph defines a one-step method (called the underlying one-step method) with local truncation error the same order as the GLM.  The one-step method is a globally exponentially attractive, invariant manifold of the discrete-time system resulting from the time-independent change of variables.  Theorem \ref{thm:maintheorem1} generalizes the technique of characterizing the approximation properties of a GLM by its underlying one-step method to ODEs that are nonautonomous.

The second contribution of this paper is to use Theorem \ref{thm:maintheorem1} and the Lyapunov and Sacker-Sell spectral stability theory for one-step methods solving nonautonomous ODE IVPs developed in \cite{steyerthesis,SVV2016,SVV1} to prove Theorem \ref{thm:maintheorem2}.  Theorem \ref{thm:maintheorem2} states that for all sufficiently small step-sizes the numerical solution by a strictly stable GLM of a uniformly or non-uniformly exponentially stable nonautonomous linear ODE is exponentially stable.  While our analysis is unable to show uniform exponential stability even if the ODE is uniformly exponentially stable, Theorem \ref{thm:maintheorem2} still provides a way of analyzing the numerical stability of time-dependent linear ODEs that may fail to satisfy the hypotheses of AN- and B-stability theory (for an example of such an ODE see equation \eqref{eq:2dlin} below).  Subsequently, we apply Theorem \ref{thm:maintheorem2} to prove Proposition \ref{thm:nonlinapprox} showing that a strictly stable GLM approximating a uniformly exponentially stable trajectory of a nonlinear ODE will produce an (non-uniformly) exponentially stable numerical solution.  The theoretical results are used to develop a Lyapunov exponent based stability diagnostic to determine when a strictly stable GLM fails to produce a decaying numerical solution to a linear ODE whose Lyapunov or Sacker-Sell spectrum is bounded above by zero.

The use of invariant manifold theory to characterize the approximation properties of a multistep method by an associated one-step method was pioneered in \cite{K1986} and \cite{EN1988}.  The results of \cite{K1986} were extended to GLMs in \cite{Stoffer1993} using the invariant manifold theory for maps developed in \cite{NippStoffer1992}.  Understanding the properties of the underlying one-step method of a GLM is critical for understanding the evolution of the numerical solution (see for example the papers \cite{Hairer2008} and \cite{DHZ2013} as well as the book \cite{HLW}).  We extend existing theoretical techniques for invariant manifold reduction of GLMs for autonomous differential equations to nonautomous equations by employing the invariant manifold theory for nonautonomous differential and difference equations (see \cite{aulbach1998,AWP2002,ARS,AW2003}).  We remark here that we can prove the existence of an underlying one-step method for strictly stable GLMs solving nonautonomous ODEs by applying the standard technique of converting a nonautonomous ODE to an autonomous ODE in an extended phase space of one higher dimension (see Section 4.2 of \cite{steyerthesis} for the special case of strictly stable linear multistep methods).  However, we give an example below (Equation \eqref{eq:naode}) that shows that this type of reduction excludes nonautonomous ODEs satisfying the hypotheses of the theory developed in this paper.

The stability of the numerical solution of an ODE IVP by a multistep method is a challenging and important problem dating back at least to the investigations by Dahlquist in \cite{Dahlquist1956,Dahlquist1959,Dahlquist1963}.  For time-dependent trajectories the well-established stability theories (e.g. AN-stability, B-stability, algebraic stability) give conditions on a GLM so that, with no step-size restriction, it preserves the asymptotic decay of a trajectory that is uniformly decaying.  This restricts the analysis to implicit methods and there is no obvious analog of linear stability domains for time-dependent problems.  In this paper we exploit the Lyapunov and Sacker-Sell spectral stability theory for one-step methods developed in \cite{steyerthesis,SVV2016,SVV1} with theoretical results on underlying one-step methods on GLMs solving nonautonomous ODEs developed herein to characterize the stability of a strictly stable GLM solving a nonautonomous linear ODE whose Lyapunov or Sacker-Sell spectrum is bounded above by zero.

We now give an example of a nonautonomous ODE which, when viewed as an autonomous ODE in one higher dimension, does not satisfy the hypotheses of the theory for underlying one-step methods of GLMs solving autonomous ODEs.  Consider the scalar ODE 
\begin{equation}\label{eq:naode}
\dot{x}=ax +\tanh(t^2), \quad a \in \mathbb{R}, \quad t\in \mathbb{R}.
\end{equation}
The function $f(x,t):=ax + \tanh(t^2)$ is analytic, bounded in $t$ for each fixed $x$, and satisfies the global in space Lipschitz condition $|f(y,t)-f(x,t)| \leq |a||y-x|$ for all $x,y,t \in \mathbb{R}$.  The ODE \eqref{eq:naode} therefore satisfies the hypotheses of Theorem \ref{thm:maintheorem1}.  Using the standard substitution $\tau(t) =t$ and $\dot{\tau} = 1$ we can view the nonautonomous scalar ODE \eqref{eq:naode} as the following two dimensional autonomous ODE 
\begin{equation}\label{eq:naode2}
\left\{
\begin{array}{lcr}
\dot{x}=ax +\tanh(\tau^2) \\
\dot{\tau} = 1
\end{array}
\right.
\end{equation}
The right-hand side of the ODE \eqref{eq:naode2} is not Lipschitz since $\tanh(\tau^2)$ is not.  Thus the ODE \eqref{eq:naode2} fails to satisfy the hypotheses of the theories developed in \cite{K1986} and \cite{EN1988}. 

We motivate the development of our nonautonomous stability theory for GLMs with the following example.  Consider the ODE 
\begin{equation}\label{eq:2dlin}
\dot{x} = A(t)x, \quad A(t) = Q(t)B(t)Q(t)^T + \dot{Q}(t)Q(t)^T, \quad t > 0
\end{equation}
$$B(t) = \left[\begin{array}{cc}a_1\cos(t)+b_1 & \beta \\ 0 & a_2\cos(t)+b_2\end{array}\right], \quad Q(t) = \left[\begin{array}{cc}\cos(\omega(t)) & -\sin(\omega (t)) \\ \sin(\omega (t)) & \cos(\omega (t))\end{array} \right], $$
$$ b_2 < b_1 < 0, \quad a_1 ,a_2 > 0, \quad\omega \in C^2((0,\infty))$$
solved by the BDF2 method with constant step-size $h > 0$:
\begin{equation}\label{eq:bdf2}
x_{n+2}  - \frac{4}{3}x_{n+1} + \frac{1}{3}x_n= \frac{2}{3}hf(x_{n+2},t_{n+2}), \quad t_n :=nh
\end{equation}
The BDF2 method is a $3$-step and single-stage strictly stable GLM that is AN- and B-stable and has local truncation error of order $2$.  The ODE \eqref{eq:2dlin} does not satisfy the one-sided Lipschitz estimates of B-stability theory nor does it satisfy the hypotheses of AN-stability theory.  However, there exists $K > 0$ so that every solution $x(t)$ of \eqref{eq:2dlin} satisfies that $\|x(t)\| \leq K \|x(s)\| e^{b_1 (t-s)}$ where $t \geq s$ and $\| \cdot\|$ is some norm on $\mathbb{R}^{2}$.  We show that given any step-size $h > 0$, there is a suitable choice of parameters such that BDF2 can produce an exponentially growing solution to \eqref{eq:2dlin}.  Let $I_2$ denote the $2 \times 2$ identity matrix, $0_2$ denote the $2 \times 2$ matrix of zeros, $\{x_n\}_{n=0}^{\infty}$ denote the numerical solution with $0 \neq x_0 \in \mathbb{R}^{2}$, and let $X_n = (x_n^T,x_{n+1}^T)^T$.  Let $h > 0$ be such that $I_2-h A(t)$ is invertible for all $t \geq 0$ and suppose that $\alpha_i := a_i+b_i > 0$ for $i=1,2$ and $\omega(t)$ is such that  $\dot{\omega}(nh) = 2\pi/h$ for $n \geq 0$.  We then have that
$${X_{n+1} = \left[\begin{array}{cc}0_2 & I_2 \\ -\frac{1}{3}(I-hA(t_{n+2}))^{-1} & \frac{4}{3}(I-hA(t_{n+2}))^{-1}\end{array} \right] X_n \equiv C X_n}$$
where $C=\left[\begin{array}{cc}0_2 & I_2 \\ -\frac{1}{3}D^{-1} & \frac{4}{3}D^{-1}\end{array} \right]$ and $D = \left[\begin{array}{cc} 1-a_2 & 2\pi/h-\beta \\ -2\pi/h & 1-a_2\end{array}\right]$.  The matrix $C$ has an eigenvalue with modulus greater than $1$ for all sufficiently large $\beta$ and it follows that $\|X_n\| \rightarrow \infty$ exponentially fast as $n \rightarrow \infty$ for some initial value $X_0$.

Our main stability result (Theorem \ref{thm:maintheorem2}) follows from the Lyapunov and Sacker-Sell stability theory for one-step methods developed in \cite{steyerthesis,SVV2016,SVV1}.  These results use the local truncation error of a method to characterize its stability.  This confounds the concepts of accuracy and stability which have traditionally been considered separate topics in the analysis of initial value problem solvers.  The reason for blurring the separation between these concepts is based on classical ideas.  Consider a nonautonomous complex scalar test equation 
\begin{equation}\label{eq:testeq}
\dot{z} = \lambda(t)z, \quad \lambda:(t_0,\infty)\rightarrow \mathbb{C}.
\end{equation}
Suppose that we solve \eqref{eq:testeq} with a method $\mathcal{M}$ that has linear stability region $\mathcal{D}_{\mathcal{M}}$.  If the Sacker-Sell spectrum of \eqref{eq:testeq} lies to the left of zero, then zero is a uniformly exponentially stable equilibrium of \eqref{eq:testeq}.  Under this assumption it is possible that for some step-size $h > 0$, there exists a sequence $\{t_n\}_{n=0}^{\infty} \subset [t_0,\infty)$ where $t_n =t_0 + nh$ such that $h\lambda(t_n) \notin \mathcal{D}_M$ for all $n \geq 0$.  It is shown in \cite{steyerthesis,SVV1} that the coefficients $\lambda(t)$ of the test problems for \eqref{eq:2dlin} are approximately $b_i + a_i\cos(t)$, $i=1,2$.  The reason BDF2 failed to produce a decaying solution to \eqref{eq:2dlin} is that $h\lambda(t) \approx h(a_i\cos(t)+b_i)$ can cross the boundary of the stability region of BDF2 at the origin infinitely often and the step-size must be restricted to make sure that the average sign of $\lambda(t)$ is accurately approximated or equivalently that the stability spectra of the numerical method accurately approximate the stability spectra of the differential equation it is solving.  We show in Theorem \ref{thm:counterexample} in Section \ref{sec:mainresults2} that no Runge-Kutta or strictly stable linear multistep method can produce a decaying solution to every uniformly exponentially stable test equation \eqref{eq:testeq} without a step-size restriction.

The remainder of this paper is organized as follows. In Section 2 we introduce some definitions and background material on the approximation of Lyapunov and Sacker-Sell spectral intervals based on smooth $QR$ decompositions of fundamental matrix solutions and the associated nonautonomous stability theory for one-step methods.  In Section \ref{sec:mainresults1} we prove an existence theorem (Theorem \ref{thm:maintheorem1}) for underlying one-step methods of strictly stable GLMs.  In Section \ref{sec:mainresults2} we apply Theorem \ref{thm:maintheorem1} to prove Theorem \ref{thm:maintheorem2} which relates the stability of a strictly stable GLM solving a nonautonomous linear ODE to the Lyapunov and Sacker-Sell spectrum of its underlying one-step method.  In Section \ref{sec:experiments} we present the results of two experiments showing how the theory developed in Section \ref{sec:mainresults} can be used to develop a Lyapunov exponent based stability diagnostic for strictly stable GLMs solving linear ODEs.  The paper is concluded with some final remarks in Section \ref{sec:conclusion}.

\section{Preliminaries}\label{sec:preliminaries}

%In this section we introduce some notation and terminology, present some results from the theory of Lyapunov and Sacker-Sell spectra and their approximation using $QR$ based methods, and state the main results from the nonautonomous stability theory for one-step methods necessary for the development of the theory in this paper.  

For the remainder of this work we let $\|\cdot\|$ be a norm on $\mathbb{R}^{d}$ and use the same symbol for the induced matrix norm.  We may sometimes drop writing the explicit $t$ dependence of matrices and functions when their time dependence is clear from the context.  Whenever we use the word stability we are referring to time-dependent Lyapunov stability.  Consider the well-posed ODE IVP with sufficiently smooth $f$
\begin{equation}\label{eq:odeivp}
\left\{
\begin{array}{lcr}
\dot{x} = f(x,t) \\
x(t_0) = x_0
\end{array}\right.
\end{equation}
where $f:\mathbb{R}^{d} \times (\tau_0,\infty) \rightarrow \mathbb{R}^{d}$, $\tau_0 \geq -\infty$, and $t_0 > \tau_0$.  We consider numerical solutions of \eqref{eq:odeivp} by a fixed step-size, $k$-step, $r$-stage general linear method 
\begin{equation}\label{eq:glm}\left\{\begin{array}{lcr}
G_n = (U \otimes I_d)X_n +h(C \otimes I_d)F_n\\
X_{n+1} = (V \otimes I_d)X_n +h(D \otimes I_d)F_n
\end{array}\right.
\end{equation}
where $h > 0$ is the size of the time step (the step-size), $t_n = t_0 + nh$, $I_d$ is the $d\times d$ identity matrix, $U  \in \mathbb{R}^{r \times k}$, $V \in \mathbb{R}^{k \times k}$, $C \in \mathbb{R}^{r \times r}$, $D\in \mathbb{R}^{k \times r}$, $F_n = (f_{n,1},\hdots,f_{n,r})^T \in \mathbb{R}^{dr}$ where $f_{n,i} =f(g_{n,i},t_n+\xi_i h)$ for some real constants $\xi_i$ and $i=1,\hdots,r$, and $G_n = (g_{n,1}^T,\hdots,g_{n,r}^T)^T \in \mathbb{R}^{dr}$.  The symbol $\otimes$ denotes the Kronecker matrix product which defines an algebraic operation on matrices $A =(a_{i,j}) \in \mathbb{R}^{m \times n}$, $B \in \mathbb{R}^{p\times q}$ for positive integers $m,n,p,q$ by the rule
$$A \otimes B = \left[\begin{array}{ccccc} a_{1,1}B & \hdots & a_{1,n}B \\
\vdots & \ddots & \vdots \\ a_{m,1}B & \hdots & a_{m,n}B
 \end{array} \right]$$
An important property of Kronecker products that we use in Section \ref{sec:mainresults} is that if $A$ and $B$ are invertible, then $(A \otimes B)^{-1} = (A^{-1} \otimes B^{-1})$. A general linear method \eqref{eq:glm} is said to be strictly stable if $1$ is an eigenvalue of $V$ and all the other eigenvalues of $V$ have modulus strictly less than $1$.   We refer readers to \cite{Butcher1987} and \cite{Jackiewicz2009} for excellent overviews of the theory of general linear methods. % Strict stability is not a particularly stringent hypothesis to place on a GLM since all Runge-Kutta methods are strictly stable as are many popular linear multistep methods such as BDF1-3 and the Adams-Bashforth methods. 

To use a GLM \eqref{eq:glm} to approximate the solution of \eqref{eq:odeivp} we need a starting procedure $\mathcal{S}_h:\mathbb{R}^{d} \rightarrow \mathbb{R}^{kd}$ and a finishing procedure $\mathcal{F}_h:\mathbb{R}^{dk}\rightarrow \mathbb{R}^{d}$ such that $\mathcal{F}_h \circ \mathcal{S}_h = \text{id}$.  The starting procedure takes the initial condition $x_0$ into an initial value $X_0$ for the method \eqref{eq:glm} and a finishing procedure takes values $X_n$ produced by the method and turns them into an approximation to the solution of \eqref{eq:odeivp} at $t_n$. Let $F:\mathbb{R}^{dk} \times \mathbb{Z} \times (0,\infty) \rightarrow \mathbb{R}^{dk}$ denote the map defined by the output $X_n$ of the GLM \eqref{eq:glm} using step-size $h > 0$ such that $X_{n+1} = F(X_n,n,h)$.  A general linear method is said to have local truncation error of order $p$ relative to a starting procedure $\mathcal{S}_h$ if
$$F(\mathcal{S}_h(v(t)),n,h)-\mathcal{S}_h(v(t+h)) = \mathcal{O}(h^{p+1})$$
for every sufficiently smooth function $v(t)$, $n \geq 0$, and sufficiently small $h$.  A general linear method is said to have local truncation error of order $p$ if $p$ is the maximal positive integer such that the method has local truncation error of order $p$ relative to some starting procedure $\mathcal{S}_h$.  

If $f(x,t)$ is continuously differentiable on its domain, then associated to the solution $x(t;x_0,t_0)$ of \eqref{eq:odeivp} is the linear variational equation  
\begin{equation}\label{eq:lineq}
\dot{x}=Df(x(t;x_0,t_0),t)x \equiv A(t)x, \quad t > t_0, \quad D= \frac{\partial}{\partial x}
\end{equation}
where $A:(t_0,\infty)\rightarrow \mathbb{R}^{d}$.  The stability of the zero solution of \eqref{eq:lineq} in general does not depend on the time-dependent eigenvalues of $A(t)$ (see the example at the bottom of page 3 of \cite{Coppel} or the third example on page 24 of \cite{Kreiss1978}) which has motivated the development of several alternative stability spectra.  The two spectra we consider in this work are the Lyapunov spectrum, based on the theory of Lyapunov exponents originating in \cite{lyap}, and the Sacker-Sell spectrum, which was developed in \cite{SackerSell1978}.  

We say that the zero solution of a nonautonomous linear ODE of the form \eqref{eq:lineq} is uniformly exponentially stable (or simply that the nonautonomous linear ODE is uniformly exponentially stable) if for any fundamental matrix solution $X(t)$, there exists $K > 0$ and $\gamma > 0$ so that 
$$\|X(t)\| \leq K e^{-\gamma(t-s)}\|X(s)\|, \quad t \geq s \geq t_0.$$
Analogously, we say that the zero solution of \eqref{eq:lineq} is exponentially stable (or simply that \eqref{eq:lineq} is exponentially stable) if for any fundamental matrix solution $X(t)$, there exists $K > 0$ and $\gamma > 0$ so that 
$$\|X(t)\| \leq Ke^{-\gamma (t-t_0)}\|X(t_0)\|, \quad t \geq t_0.$$
A sufficient condition for uniform exponential stability is that the Sacker-Sell spectrum is bounded above by zero and a sufficient condition for exponential stability is that the Lyapunov spectrum is bounded above by zero. 

These linear stability concepts have natural extensions for solutions of nonlinear differential equations.  For $s \geq t_0$, let $x(t;u_{s},s)$ denote the solution of the differential equation $\dot{x} = f(x,t)$ from \eqref{eq:odeivp} with the initial condition $x(s;u_{s},s) = u_{s}$.

\begin{definition}
We say that the solution $x(t;x_0,t_0)$ of \eqref{eq:odeivp} is exponentially stable if there exists $K,\gamma,\delta > 0$ so that if $\|u_0-x_0\| < \delta$ and $t \geq t_0$, then $\|x(t;u_0,t_0)-x(t;x_0,t_0)\| \leq Ke^{-\gamma(t-t_0)}\|u_0-x_0\|$.  We say that $x(t;x_0,t_0)$ is uniformly exponentially stable if there exists $\delta, K , \gamma > 0$ so that for each $s \geq t_0$ if $\|u_{s} - x(s;x_0,t_0)\| <\delta$, then $\|x(t;u_s,s)-x(t;x_0,t_0)\|  \leq Ke^{-\gamma(t-s)}\|u_s-x(s;x_0,t_0)\|$ for all $t \geq s$.
\end{definition}

Using \eqref{eq:lineq} we can express the differential equation of \eqref{eq:odeivp} in linear inhomogeneous form $\dot{x} = f(x,t) = A(t)x + N(x,t)$.  If $x(t;x_0,t_0)$ is bounded in $t$, then an argument using the nonlinear variation of parameters formula shows that uniform exponential stability of \eqref{eq:lineq} implies uniform exponential stability of $x(t;x_0,t_0)$.  A similar implication is not true if \eqref{eq:lineq} is exponentially stable, but not uniformly so (see \cite{Perron1930} or Equation 14 in \cite{LK2007}).

 %Under the generic assumption (see page 21 of \cite{Palmer}) that \eqref{eq:lineq} has an integral separation structure the stability of $x(t;x_0)$ is determined by the stability of the zero solution of \eqref{eq:lineq}. 

The QR theory for the approximation of the Lyapunov and Sacker-Sell spectrum of \eqref{eq:lineq}, developed and analyzed extensively in \cite{DVV2003b,DVV2005,DVV2006,DVV2003}, is based on the construction of a time-dependent orthogonal change of variables.  Let $Q(t)$ be a solution of the differential equation
\begin{equation}\label{eq:Qeqn}
\dot{Q}(t) = Q(t)S(Q(t),A(t)), \quad S(Q,A)_{ij} = \left\{
\begin{array}{rl}
(Q^T AQ)_{i,j},& i>j\cr
0, & i=j\cr
-(Q^T A Q)_{i,j},& i<j\cr
\end{array}
\right.
\end{equation}
that satisfies $Q(t)^T Q(t) = I_d$ where $A(t)$ is the coefficient matrix of \eqref{eq:lineq}.  Under the change of variables $x=Q(t)y$, the system 
\begin{equation}\label{eq:Beqn}
\dot{y} = B(t)y, \quad B(t)=Q(t)^T A(t) Q(t) - Q(t)^T \dot{Q}(t), \quad t > t_0
\end{equation}
is such that $B(t)$ is upper triangular for all $t > t_0$.  We refer to the system \eqref{eq:Beqn} as a corresponding upper triangular system to \eqref{eq:lineq} (with initial orthogonal factor $Q(t)$) and the Lyapunov and Sacker-Sell spectra of these two systems coincide.  The following definition and theorem summarize how to compute end-points of the Lyapunov and Sacker-Sell spectrum of \eqref{eq:lineq} in terms of the diagonal entries of $B(t)$.

\begin{definition}\label{def:iss}
Assume that $B:(t_0,\infty) \rightarrow \mathbb{R}^{d\times d}$ is bounded, continuous, and upper triangular and let $B_{i,j}(t)$ denote the $i,j$ entry of $B(t)$.  Suppose that for any $i <j$ one of the two following conditions hold:
\begin{enumerate}
\item $B_{i,i}$ and $B_{j,j}$ are integrally separated, that is, there exists $a_{i,j} > 0$ and $b_{i,,j} \in \mathbb{R}$ so that if $t \geq s > t_0$, then
\begin{equation}\label{suffcondstablyap.0}
\int_{s}^{t}B_{i,i}(\tau) - B_{j,j}(\tau)d\tau \geq a_{i,j}(t-s)+b_{i,j}.
\end{equation}
\item For every $\varepsilon > 0$ there exists $M_{i,j}(\varepsilon) > 0$ so that if $t \geq s > t_0$, then
\begin{equation}\label{suffconstablyap.1}
\left|\int_{s}^{t}B_{i,i}(\tau)-B_{j,j}(\tau) d\tau\right| \leq M_{i,j} + \varepsilon(t-s).
\end{equation}
\end{enumerate}
Then we say that the ODE $\dot{y}=B(t)y$ and $B(t)$ have an integral separation structure.  If the first condition is satisfied for all $i < j$, then we say that $B(t)$ and $\dot{y}=B(t)y$ are integrally separated.   If the system \eqref{eq:lineq} has a corresponding upper triangular system that has an integral separation structure, then we say that \eqref{eq:lineq} has an integral separation structure and if the corresponding upper triangular system is integrally separated, then we say that \eqref{eq:lineq} is integrally separated.
\end{definition}

%The endpoints of the Sacker-Sell spectral intervals of \eqref{eq:lineq} can always be determined from the diagonal entries of $B(t)$ so long as $A(t)$ is bounded and continous (see e.g. Theorem 5.5 of \cite{DVV2003}).  Under the generic assumption (see page 21 of \cite{Palmer}) that \eqref{eq:Beqn} has an integral separation structure the endpoints of the Lyapunov spectral intervals of \eqref{eq:lineq} can be determined from the diagonal elements of $B(t)$ (see e.g. Theorem 5.1 of \cite{DVV2003}).   when the system has an integral separation structure or is asymptotically contracting.

\begin{theorem}[Theorems 2.8, 5.1, 5.5, 6.1 of \cite{DVV2003}]\label{thm:lects}
Let $B:(t_0,\infty) \rightarrow \mathbb{R}^{d\times d}$ be bounded, continuous, and upper triangular and let $\Sigma_{ED} = \cup_{i=1}^{d}[\alpha_i,\beta_i]$ denote the Sacker-Sell spectrum of $\dot{y}(t) = B(t)y(t)$.  For $i=1,\hdots,d$ we have:
\begin{equation}\label{eq:SSformula}
\alpha_i = \liminf_{H\rightarrow 0}\inf_{t \geq t_0}\dfrac{1}{H}\int_{t}^{t+H}B_{i,i}(\tau)d\tau, \quad \beta_i = \limsup_{H\rightarrow \infty}\sup_{t \geq t_0}\dfrac{1}{H}\int_{t}^{t+H}B_{i,i}(\tau)d\tau.
\end{equation}
%Furthermore, there exists $H > 0$ so that if $t-s > H$, then for $i=1,\hdots,d$ we have
%\begin{equation}\label{eq:SSformula2}
%\alpha_i = \inf_{t \geq t_0}\dfrac{1}{t-s}\int_{s}^{t}B_{i,i}(\tau)d\tau, \quad \beta_i = sup_{t \geq t_0}%\dfrac{1}{t-s}\int_{s}^{t}B_{i,i}(\tau)d\tau. 
%\end{equation}
Assume that $B:(t_0,\infty) \rightarrow \mathbb{R}^{d\times d}$ has an integral separation structure and let $\Sigma_L = \cup_{i=1}^{d}[\eta_i,\mu_i]$ denote the Lyapunov spectrum of $\dot{y}=B(t)y$.  Then the Lyapunov spectrum of $\dot{y}(t)=B(t)y(t)$ is continuous with respect to $L^{\infty}(t_0,\infty)$ perturbations of $B(t)$ and for $i=1,\hdots,d$:
\begin{equation}\label{suffconstablyap.2}
\eta_i =\liminf_{t\rightarrow \infty}\dfrac{1}{t-t_0}\int_{t_0}^{t}B_{i,i}(\tau)d\tau, \quad \mu_i =\limsup_{t\rightarrow \infty}\dfrac{1}{t-t_0}\int_{t_0}^{t}B_{i,i}(\tau)d\tau.
\end{equation}\qed
\end{theorem}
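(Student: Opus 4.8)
The plan is to exploit the upper-triangular form of $B(t)$ to reduce both spectral computations to the scalar equations $\dot u = B_{i,i}(t)u$ governed by the diagonal entries, treating the Sacker--Sell and Lyapunov assertions separately. Writing $Y(t)$ for the fundamental matrix solution of $\dot y = B(t)y$ from \eqref{eq:Beqn} with $Y(t_0)=I_d$, the triangular structure forces $Y(t)$ to remain upper triangular, with diagonal entries $Y_{i,i}(t)=\exp\!\big(\int_{t_0}^{t}B_{i,i}(\tau)\,d\tau\big)$; consequently every growth or decay rate that enters either spectrum is encoded in the averaged integrals $\tfrac{1}{t-s}\int_s^t B_{i,i}$, and boundedness of $B$ keeps all of these quantities finite. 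The subspaces $\{y:y_1=\cdots=y_j=0\}$ are invariant under the flow, which is the structural fact that lets me decouple the off-diagonal coupling.

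For the Sacker--Sell endpoints I would argue in two stages. First I would show that the exponential-dichotomy spectrum of the full triangular system equals $\cup_{i=1}^d[\alpha_i,\beta_i]$, where $[\alpha_i,\beta_i]$ is the scalar dichotomy spectrum of $\dot u=B_{i,i}u$. Each scalar spectrum is contained in the full spectrum by restricting to the appropriate coordinate; for the reverse containment, if $\lambda\notin\cup_i[\alpha_i,\beta_i]$ then each shifted scalar equation $\dot u=(B_{i,i}-\lambda)u$ has an exponential dichotomy, and I would assemble these into a dichotomy of the whole system using the invariant flag together with the roughness of exponential dichotomies, so that the triangular off-diagonal terms act as an admissible perturbation and $\lambda$ lies in the resolvent. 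Second, for the scalar equation the dichotomy endpoints are the lower and upper Bohl exponents, which coincide with the averaged-integral expressions in \eqref{eq:SSformula}; this reduces to comparing $\int_s^t(B_{i,i}-\lambda)$ with a linear function of $t-s$ uniformly in the base point $s$ and passing to the indicated limits in the window length $H$.

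For the Lyapunov statement I would use the integral separation structure hypothesis of Definition~\ref{def:iss}, which is precisely the condition that renders the off-diagonal coupling subdominant. Under it one constructs a bounded, boundedly invertible (kinematic) change of variables transforming $\dot y=B(t)y$ into its diagonal part $\mathrm{diag}(B_{1,1},\dots,B_{d,d})$ modulo a uniformly small remainder, so that the Lyapunov exponents of the system agree with those of the scalar diagonal equations, namely the upper and lower means recorded in \eqref{suffconstablyap.2}. The asserted continuity of the Lyapunov spectrum under $L^\infty$ perturbations of $B$ is then the classical Bylov--Vinograd/Millionshchikov characterization of integral separation as the robustness condition for the spectrum: I would verify that the integral separation structure persists under sufficiently small $L^\infty$ perturbations and re-apply the diagonal reduction to the perturbed system, obtaining exponents that are close to the unperturbed ones.

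The hard part will be the decoupling of the off-diagonal entries, i.e.\ showing that each spectrum is genuinely determined by the diagonal alone. For Sacker--Sell this is handled uniformly and without integral separation, since a dichotomy of the diagonal survives the triangular perturbation by roughness; the only care needed is to match the stable and unstable splittings across the invariant flag. For the Lyapunov spectrum the coupling can in principle shift the exponents, and it is exactly here that the integral separation structure is indispensable---carrying out the kinematic-similarity construction with Gronwall-type estimates so that the remainder is uniformly small, and arranging that this smallness (and the separation itself) survives $L^\infty$ perturbation, is where the bulk of the technical effort lies.
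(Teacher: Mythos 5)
First, a point of comparison that matters here: the paper does \emph{not} prove Theorem \ref{thm:lects}. It is imported verbatim, with attribution and an immediate \verb|\qed|, from \cite{DVV2003}; there is no internal proof to measure your argument against. Judged on its own merits, your outline does follow the same broad architecture as the cited source --- scalar Bohl-exponent formulas for the dichotomy spectrum of the diagonal entries, and the Bylov--Millionshchikov integral-separation theory for the Lyapunov spectrum and its $L^\infty$ stability --- so the plan is aimed at the right targets. (You also silently corrected the misprint in \eqref{eq:SSformula}: the lower endpoint formula must have $H\to\infty$, not $H\to 0$, for the Bohl-exponent interpretation you use.)

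However, there are genuine gaps at exactly the step you call ``the hard part.'' For the Sacker--Sell half you claim a dichotomy of the diagonal system ``survives the triangular perturbation by roughness,'' but roughness of exponential dichotomies requires the perturbation to be \emph{small} relative to the dichotomy constants, and the off-diagonal entries of $B$ are merely bounded; as stated this step fails. The standard repairs are (i) the constant kinematic similarity $T_\varepsilon=\mathrm{diag}(1,\varepsilon,\dots,\varepsilon^{d-1})$, which preserves the spectrum and multiplies each $B_{i,j}$, $i<j$, by $\varepsilon^{j-i}$, after which roughness does apply, or (ii) the variation-of-constants lemma that a block upper triangular system whose diagonal blocks admit dichotomies with respect to $\lambda$ admits one as well, with no smallness hypothesis. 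Relatedly, your invariant flag is backwards: for upper triangular $B$ the invariant subspaces are $\mathrm{span}(e_1,\dots,e_j)=\{y: y_{j+1}=\cdots=y_d=0\}$, not $\{y: y_1=\cdots=y_j=0\}$, so only the first scalar equation is a genuine restriction of the flow; the containment $\cup_i[\alpha_i,\beta_i]\subseteq\Sigma_{ED}$ for $i\geq 2$ must go through quotients (or the adjoint, lower triangular, system), not ``restriction to a coordinate.'' Finally, for the Lyapunov half the theorem assumes only an integral separation \emph{structure} in the sense of Definition \ref{def:iss}, which allows pairs of diagonal entries that are not integrally separated but only $\varepsilon$-close in integral; Bylov's kinematic diagonalization, which you invoke, needs genuine integral separation. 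The clustered case requires reduction to block-diagonal (not diagonal) form and a separate argument that the endpoint formulas \eqref{suffconstablyap.2} and the $L^\infty$ continuity survive within each cluster --- this is precisely the additional technical content of the cited theorems.
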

A similar theorem can be proved for discrete-time linear systems (see Section 3.2 of \cite{VV1} and Corollary 3.25 of \cite{Potzsche2012}) which is used to prove the main linear stability results of Section 3.1 in \cite{SVV1} and Section 3.2 of \cite{steyerthesis} which are summarized in Theorem \ref{thm:onesteptheorem} below.  This result is fundamentally based on the observation that the numerical solution of \eqref{eq:lineq} by a one-step method with local truncation error of order $p \geq 1$ takes the form $x_{n+1} = \Phi^A(n;h)x_n$.  For such a discrete-time difference equation we have for each fixed initial orthogonal $Q_0 \in \mathbb{R}^{d\times d}$ a discrete QR iteration $\Phi^A(n;h) Q_n = Q_{n+1} R^A(n;h)$ where $Q_n \in \mathbb{R}^{d\times d}$ is orthogonal and $R^A(n;h)$ is upper triangular with positive diagonal entries. 
\begin{theorem}\label{thm:onesteptheorem}
Let $x_{n+1} = \Phi^A(n;h)x_n$ denote the numerical solution to \eqref{eq:lineq} by a one-step method with local truncation error of order $p \geq 1$ with step-size $h > 0$ and initial condition $x_0$.  Let $\Sigma_L^A = \cup_{i=1}^{n}[\eta_i^A,\mu_i^A]$ and $\Sigma_{ED}^A = \cup_{i=1}^{d}[\alpha_i^A,\beta_i^A]$ denote the Lyapunov and Sacker-Sell spectrum respectively of the discrete nonautonomous difference equation $x_{n+1} = \Phi^A(n;h)x_n$ and let $\Sigma_L= \cup_{i=1}^{n}[\eta_i,\mu_i]$ and $\Sigma_{ED} = \cup_{i=1}^{d}[\alpha_i,\beta_i]$ denote the Lyapunov and Sacker-Sell spectrum of \eqref{eq:lineq}.
\begin{enumerate}
\item If the coefficient matrix $A(t)$ of \eqref{eq:lineq} is bounded and continuous, then for every $\varepsilon > 0$ there exists $h^* > 0$ so that if $h \in (0,h^*)$, then $|\alpha_i^A-\alpha_i| < \varepsilon$ and $|\beta_i^A -\beta_i| < \varepsilon$ for $i=1,\hdots,d$.

\item Assume \eqref{eq:lineq} has an integral separation structure and let $Q_0 \in \mathbb{R}^{d \times d}$ be orthogonal.  Let $R^A(n;h)$ be the corresponding upper triangular factor of the discrete QR iteration applied to $x_{n+1} = \Phi^A(n;h)x_n$ with initial orthogonal factor $Q_0$ and let $B(t) = Q(t)^T A(t) Q(t) - Q(t)^T \dot{Q}(t)$ where $Q(t_0) = Q_0$ is the unique orthogonal solution of \eqref{eq:Qeqn}.  There exists $h^* > 0$ so that if $h \in (0,h^*)$ and $i=1,\hdots,d$, then $|\alpha_i^A-\alpha_i|,|\eta_i^A-\eta_i|,|\beta_i^A -\beta_i|,|\mu_i^A-\mu_i|= \mathcal{O}(h^{p})$ and for $n \geq 0$ the diagonal entries of $R^A(n;h)$ satisfy that $\ln(R^A_{i,i}(n;h)) = \int_{t_n}^{t_{n+1}}B(\tau)d\tau + \mathcal{O}(h^{p+1})$.

\qed
\end{enumerate}
\end{theorem}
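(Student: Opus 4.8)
The plan is to reduce both statements to a single comparison between the diagonal of the discrete upper-triangular factor $R^A(n;h)$ and the integrals $\int_{t_n}^{t_{n+1}} B_{i,i}(\tau)\,d\tau$ of the continuous triangular system \eqref{eq:Beqn}, and then to feed that comparison into the discrete analogues of the spectral formulas of Theorem \ref{thm:lects}. First I would exploit linearity: because the method has local truncation error of order $p$ and the starting/finishing procedures for a one-step method are the identity, applying it to \eqref{eq:lineq} and running it from a fundamental set of exact solutions gives a one-step transition matrix agreeing with the exact transition matrix $T(t_{n+1},t_n)$ of \eqref{eq:lineq} to within $\mathcal{O}(h^{p+1})$, i.e. $\Phi^A(n;h) = T(t_{n+1},t_n) + \mathcal{O}(h^{p+1})$, uniformly in $n$ since $A(t)$ (and its relevant derivatives) are bounded.

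Next I would record the factorization induced by the orthogonal change of variables \eqref{eq:Qeqn}--\eqref{eq:Beqn}. Writing $\Psi(t,s)$ for the transition matrix of $\dot y = B(t)y$, the substitution $x=Q(t)y$ gives $T(t_{n+1},t_n) = Q(t_{n+1})\Psi(t_{n+1},t_n)Q(t_n)^T$, and since $B(t)$ is upper triangular the factor $\Psi(t_{n+1},t_n)$ is upper triangular with positive diagonal entries $\exp\!\big(\int_{t_n}^{t_{n+1}} B_{i,i}(\tau)\,d\tau\big)$. Thus $T(t_{n+1},t_n)Q(t_n) = Q(t_{n+1})\Psi(t_{n+1},t_n)$ is itself a QR factorization, so by uniqueness (positive diagonal) the sequence $Q(t_n)$ together with the factors $\Psi(t_{n+1},t_n)$ realizes exactly a continuous-time QR iteration; this is the object I must compare the discrete iteration $\Phi^A(n;h)Q_n = Q_{n+1}R^A(n;h)$ against.

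The heart of the argument, and what I expect to be the main obstacle, is to show that the discrete orthogonal factors $Q_n$ stay within $\mathcal{O}(h^p)$ of $Q(t_n)$ uniformly in $n$ and that $\ln R^A_{i,i}(n;h) = \int_{t_n}^{t_{n+1}} B_{i,i}(\tau)\,d\tau + \mathcal{O}(h^{p+1})$. The difficulty is that the per-step $\mathcal{O}(h^{p+1})$ discrepancy between $\Phi^A(n;h)$ and $T(t_{n+1},t_n)$ feeds back through the recursive QR iteration, so a naive perturbation bound would let the error in $Q_n$ grow with $n$. This is precisely where the integral separation hypothesis enters: it provides a uniform gap between the diagonal growth rates of $\Psi$, which renders the QR iteration exponentially stable and damps the accumulated perturbation of the orthogonal factors to a bounded $\mathcal{O}(h^p)$; because the diagonal entries of $R^A$ are insensitive to first order to orthogonal perturbations of the factors along the associated directions, they remain perturbed only at the local order $\mathcal{O}(h^{p+1})$. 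I would establish this with a QR perturbation lemma combined with a discrete Gronwall/dichotomy estimate under integral separation, following the discrete QR theory of \cite{VV1,Potzsche2012}.

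Given the diagonal estimate, part (2) follows by substituting $\ln R^A_{i,i}(n;h)$ into the discrete analogues of the Lyapunov and Sacker-Sell formulas \eqref{suffconstablyap.2} and \eqref{eq:SSformula}: summing the per-step error $\mathcal{O}(h^{p+1})$ over $N$ steps and dividing by the elapsed time $t_N - t_0 = Nh$ converts it into an $\mathcal{O}(h^p)$ error in each spectral endpoint, while the telescoped integrals reproduce exactly the continuous time-averages, yielding $|\eta_i^A-\eta_i|,|\mu_i^A-\mu_i|,|\alpha_i^A-\alpha_i|,|\beta_i^A-\beta_i| = \mathcal{O}(h^p)$. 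For part (1), where integral separation is not assumed and the precise diagonal comparison is unavailable, I would instead appeal to the robustness of the Sacker-Sell (exponential-dichotomy) spectrum under $L^\infty$-small perturbations together with the convergence of the discrete evolution generated by $\Phi^A(n;h)$ to the flow of \eqref{eq:lineq} as $h \to 0$; continuity of the dichotomy spectrum then gives the qualitative bound $|\alpha_i^A-\alpha_i|,|\beta_i^A-\beta_i| < \varepsilon$ for all sufficiently small $h$, without a convergence rate.
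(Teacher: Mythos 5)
You should know at the outset that the paper contains no proof of Theorem \ref{thm:onesteptheorem}: it is stated, with a tombstone, as a summary of results proved in \cite{SVV1} and in Section 3.2 of \cite{steyerthesis}, resting on the discrete-time spectral theory of \cite{VV1} and \cite{Potzsche2012}, so there is no internal argument to compare yours against. Measured against the strategy of those cited works, your outline is faithful: order-$p$ consistency applied to a fundamental set of solutions gives $\Phi^A(n;h)=T(t_{n+1},t_n)+\mathcal{O}(h^{p+1})$ uniformly in $n$; the orthogonal change of variables \eqref{eq:Qeqn}--\eqref{eq:Beqn} exhibits the exact transition matrices as a QR iteration whose triangular factors have diagonal entries $\exp\bigl(\int_{t_n}^{t_{n+1}}B_{i,i}(\tau)\,d\tau\bigr)$; integral separation is indeed what makes the discrete QR iteration exponentially stable, so the per-step defect does not accumulate in the orthogonal factors; and dividing the telescoped per-step errors by $t_N-t_0=Nh$ in the discrete analogues of \eqref{eq:SSformula} and \eqref{suffconstablyap.2} converts $\mathcal{O}(h^{p+1})$ per step into $\mathcal{O}(h^{p})$ for the spectral endpoints. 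Your claim that the diagonal of $R^A(n;h)$ is first-order insensitive to the $\mathcal{O}(h^{p})$ error in the orthogonal factors is also the right mechanism, though you only assert it: writing $Q(t_n)^TQ_n=e^{S_n}$ with $S_n$ skew-symmetric and $\|S_n\|=\mathcal{O}(h^{p})$, the first-order correction to $R^A(n;h)$ is $\Psi S_n-S_{n+1}\Psi$ with $\Psi=I+\mathcal{O}(h)$, and its diagonal is $\mathcal{O}(h^{p+1})$ precisely because skew-symmetric matrices have zero diagonal.

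The genuine gap is in your argument for part (1). Roughness of exponential dichotomies gives only one inclusion: if $\lambda$ lies at distance $\varepsilon$ from $\Sigma_{ED}$, the shifted sampled flow has a per-step dichotomy with gap of order $\varepsilon h$ and constants independent of $h$, so the $\mathcal{O}(h^{p+1})$ defect is admissible once $h$ is small, and $\Sigma_{ED}^A$ cannot spill outside an $\varepsilon$-neighborhood of $\Sigma_{ED}$ (upper semicontinuity). The opposite inclusion --- that no spectral interval collapses, so that each endpoint $\alpha_i,\beta_i$ is approximated to within $\varepsilon$ --- is not a consequence of ``continuity of the dichotomy spectrum,'' because the dichotomy spectrum is in general only upper semicontinuous under small perturbations; running roughness in the reverse direction would require dichotomy constants for the discrete family that are uniform in $h$, which is exactly what is not known a priori. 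That direction must be obtained by other means, e.g., as in the cited works, by comparing growth rates through triangularized systems and Bohl-exponent formulas in the spirit of Theorem \ref{thm:lects}, rather than by abstract perturbation of dichotomies. Two smaller omissions worth repairing: you never state that the discrete per-step exponents must be normalized by $1/h$ before they are compared with the continuous spectra, and the order-$p$ per-step estimate requires smoothness of $A$ beyond the ``bounded and continuous'' hypothesis of part (1), which the intended reading of the theorem implicitly supplies.
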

We apply Theorem \ref{thm:onesteptheorem} in Section \ref{sec:mainresults2} to prove a stability result for strictly stable GLMs solving nonautonomous, linear ODEs with Sacker-Sell spectrum bounded above by zero.  Similar to results derived in \cite{Eirola1988} and \cite{Beyn1987}, an argument with the variation of parameters formula allows us to use Theorem \ref{thm:onesteptheorem} to prove the following theorem (See Theorems 4.1-2 in \cite{SVV1} or Theorem 9 in \cite{steyerthesis}).
\begin{theorem}\label{thm:onestep_nonlin}
Assume that $x(t;x_0,t_0)$ is bounded and the right end-point of the Sacker-Sell spectrum of \eqref{eq:lineq} is $-\alpha < 0$.  Let $u(n;u_{0},h)$ denote the numerical solution by a one-step method with local truncation error of order $p \geq 1$ of $\dot{x} = f(x,t)$ with the initial condition $u_{0}$ at initial time $t_0 > \tau_0$ and fixed step-size $h > 0$.  Suppose that $f \in C^{p+2}(\mathbb{R}^d \times (\tau_0,\infty))$ and $f(x,\cdot)$ is bounded for each $x \in \mathbb{R}^d$.  Then given any $D > 0$ and $\gamma \in (0,\alpha)$, there exists $h^* > 0$, $K > 0$ and $0 < \delta_2 < \delta_1$ so that if $\|u_{0}-x_0\| < \delta_1$ and $h \in (0,h^*)$, then $\|u(n;u_{0},h)-x(nh+t_0;u_0,t_0)\| \leq D h^{p}$ and if $\|u_{0}-x_0\| < \delta_2$, $h \in (0,h^*)$, and $n \geq m$, then 
$$
\|u(n;u_{0},h)-u(n;x_0,h)\| \leq K  e^{-\gamma h(n-m)}\|u(n;u_{0},h)-u(n;x_0,h)\|.
$$\qed
\end{theorem}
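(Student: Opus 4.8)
The plan is to reduce the nonlinear statement to the linear spectral theory already in hand (Theorem~\ref{thm:onesteptheorem}) by working with the variational equation \eqref{eq:lineq} along the reference trajectory $x(t;x_0,t_0)$, and then to absorb the nonlinear remainder with a discrete variation-of-parameters (nonlinear Gronwall) argument in the spirit of \cite{Eirola1988,Beyn1987}. First I would record that, because $x(t;x_0,t_0)$ is bounded and $f \in C^{p+2}$ with $f(x,\cdot)$ bounded for each $x$, the coefficient matrix $A(t) = Df(x(t;x_0,t_0),t)$ of \eqref{eq:lineq} is bounded and continuous and the second derivatives of $f$ are bounded along the trajectory; this uniformity in $t$ (equivalently in $n$) is what lets every constant below be chosen independently of $n$. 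Fix an intermediate rate $\gamma'$ with $\gamma < \gamma' < \alpha$ and set $\varepsilon = \alpha - \gamma' > 0$.

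Next I would invoke the spectral transfer. Applying Theorem~\ref{thm:onesteptheorem}(1) to $A(t)$, there is $h^*_0 > 0$ so that for $h \in (0,h^*_0)$ the right endpoint $\beta^A_{\max}$ of the discrete Sacker-Sell spectrum of the numerical propagator $x_{n+1} = \Phi^A(n;h)x_n$ satisfies $\beta^A_{\max} < -\alpha + \varepsilon = -\gamma'$. A discrete Sacker-Sell spectrum bounded above by $-\gamma'$ yields a uniform exponential contraction of the transition matrices: writing $\Phi^A(n,m;h) = \Phi^A(n-1;h)\cdots\Phi^A(m;h)$, there is a constant $\tilde K$, chosen uniformly for $h$ in a fixed interval, with $\|\Phi^A(n,m;h)\| \le \tilde K e^{-\gamma' h(n-m)}$ for all $n \ge m$. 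This is the discrete analog of the uniform exponential stability furnished by a Sacker-Sell spectrum bounded above, and it plays the role the continuous flow plays in the variation-of-parameters arguments of the cited works.

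Then I would set up the two estimates. For the accuracy claim, let $E_n = u(n;u_0,h) - x(t_n;u_0,t_0)$. Order-$p$ consistency gives a local error $\mathcal{O}(h^{p+1})$ uniformly in $n$, and linearizing the one-step map about the trajectory gives a recursion $E_{n+1} = \Psi_n E_n + \ell_n$ whose homogeneous propagator is an $\mathcal{O}(h^p + \|E_n\|)$ perturbation of $\Phi^A(n;h)$. For the stability claim, with $e_n = u(n;u_0,h)-u(n;x_0,h)$ the same linearization produces
$$e_{n+1} = \Phi^A(n;h)\,e_n + h\,g_n(e_n), \qquad \|g_n(e)\| \le L\|e\|^2$$
on a fixed ball, the quadratic bound coming from the $C^{p+2}$ smoothness and boundedness of the trajectory. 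Applying the discrete variation-of-parameters formula together with the contraction bound and substituting $v_n = e^{\gamma' h(n-m)}\|e_n\|$, I would obtain a discrete Gronwall inequality in which the nonlinear forcing per step carries a factor $h$ that exactly matches the $\mathcal{O}(\gamma' h)$ spectral gap per step; the effective Lipschitz constant of the nonlinear part on a ball of radius $\rho$ is $\mathcal{O}(\rho)$, so shrinking the radius (i.e. choosing $\delta_2$) makes it dominated by the margin $\gamma' - \gamma$. A continuation argument confines $e_n$ to that ball and converts the rate $\gamma'$ into the advertised rate $\gamma$, yielding the exponential contraction estimate $\|e_n\| \le K e^{-\gamma h(n-m)}\|e_m\|$. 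The same geometric summation applied to $E_n$ gives $\|E_n\| \le \tilde K \,\mathcal{O}(h^{p+1})\sum_k e^{-\gamma' h(n-k)} = \mathcal{O}(h^{p+1})/(\gamma' h) = \mathcal{O}(h^p)$, hence $\le D h^p$ once $h^*$ is small, with $\delta_1$ chosen so that the numerical orbit stays inside the neighborhood where the linearization contracts.

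I expect the main obstacle to be the simultaneous uniformity in $n$ and $h$: Theorem~\ref{thm:onesteptheorem}(1) delivers convergence of the spectral endpoints, but the argument needs the full contraction constant $\tilde K$ to be uniform in $h$ on a fixed interval and the nonlinear remainder to be controlled uniformly in $n$. The delicate point is that both the per-step gap and the per-step nonlinear forcing scale like $h$, so their ratio is $h$-independent; verifying that this ratio can be made small by a single choice of $\delta_2$ valid for all small $h$ and all $n$, and threading this through the bootstrapping that keeps the orbit in the good ball, is where the real work lies. A subordinate technical nuisance is that the contracting linearization is naturally taken about the exact trajectory while the recursion for $e_n$ linearizes about the numerical one; closing this gap uses the $\mathcal{O}(h^p)$ accuracy estimate together with robustness of the exponential contraction under small perturbations of the propagator.
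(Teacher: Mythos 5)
Your proposal is correct and follows essentially the same route as the paper, which establishes this result (deferring details to Theorems 4.1--2 of \cite{SVV1} and Theorem 9 of \cite{steyerthesis}) by exactly the combination you describe: transfer of the Sacker--Sell bound to the discrete propagator via Theorem \ref{thm:onesteptheorem}, followed by a variation-of-parameters argument in the spirit of \cite{Eirola1988} and \cite{Beyn1987} to absorb the nonlinear remainder and sum the local errors. The uniformity-in-$h$ issue you flag is real but is handled in the cited works in the way you anticipate, by comparing $\Phi^A(n;h)$ to the exact solution operator, whose contraction constants are $h$-independent.
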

In Section \ref{sec:mainresults2} we explain how to obtain an analogous nonlinear stability theorem for GLMs using Proposition \ref{thm:nonlinapprox}.

\section{Main Results}\label{sec:mainresults}

\subsection{Nonautonomous underlying one-step methods}\label{sec:mainresults1}

In this section we prove that there exists a unique underlying one-step method for a strictly stable GLM approximating the solution of a nonlinear and nonautonomous ODE whose nonlinear part satisfies a global Lipschitz condition.  Throughout we consider a strictly stable, $k$-step, and $r$-stage GLM \eqref{eq:glm} that we denote by $\mathcal{M}$ which we assume has local truncation error of order $p \geq 1$.  We let $P \in \mathbb{R}^{k \times k}$ be a matrix so that $E = P^{-1}VP$ is of the form $E = \left[\begin{array}{cc} 1 & 0 \\ 0 & E_{2,2} \end{array}\right]$ 
where the eigenvalues of $E_{2,2} \in \mathbb{R}^{k-1 \times k-1}$ all have modulus strictly less than $1$ ($E$ may be taken to be e.g. the real Jordan form of $V$).  The main result of this section is the following theorem. 
\begin{theorem}\label{thm:maintheorem1}
Consider the following ODE
\begin{equation}\label{eq:lininhomo2}
\dot{x}=f(x,t) = A(t)x + N(x,t)
\end{equation}
where $f:\mathbb{R}^{d} \times (\tau_0,\infty) \rightarrow \mathbb{R}^d$, $\tau_0 \geq -\infty$, and $N(x,t)$ satisfies the global in space Lipschitz condition that there exists $K > 0$ so that for all $x,y \in \mathbb{R}^d$ and $t > \tau_0$ we have
\begin{equation}\label{eq:lip}
\|N(x,t)-N(y,t)\| \leq K\|x-y\|.
\end{equation}
Assume that $A(t)$ is bounded, $f(x,t)$ is $C^{p+1}$ on its domain, and the partial derivatives  $\dfrac{\partial^k f}{\partial x^k}(x,t)$ are bounded for $k=1,\hdots,p+1$.  Let $\{X_n\}_{n=0}^{\infty}$ where $X_n = X(n;X_0,t_0,h)$ denotes the output of $\mathcal{M}$ applied to solve \eqref{eq:lininhomo2} using step-size $h > 0$, initial value $X_0 \in \mathbb{R}^{dk}$, and initial time $t_0 >\tau_0$.  Then there exists $G> 0$, $\gamma \in (0,1)$, and $h^* > 0$ such that the following conclusions hold for any $t_0 > \tau_0$, $X_0 \in \mathbb{R}^d$, and $h \in (0,h^*)$.
\begin{enumerate}
\item If $h \in (0, h^*)$, then $\{X_n\}_{n=0}^{\infty}$ is the solution of a nonautonomous discrete dynamical system $X_{n+1} = F(X_n,n,h)$.

\item The difference equation
$$Y_{n+1} = H(Y_n,n,h) \equiv (P^{-1}\otimes I_d)F((P\otimes I)Y_n,n,h)$$
 defined from the change of variables $X_n = (P \otimes I)Y_n$ satisfies that if $h \in(0, h^*)$, then there exists a unique (but see the remark immediately after the statement of this theorem) continuous function $\varphi: \mathbb{R}^d \times \mathbb{Z} \times (0,h^*) \rightarrow \mathbb{R}^{d(k-1)}$ whose graph is invariant under the flow of $Y_{n+1} = H(Y_n,n,h)$ for $n \geq 0$ and such that for any $Y_0 \in \mathbb{R}^{dk}$, there exists $z_0^1 \in \mathbb{R}^d$ such that the solution $\{Y_n\}_{n=0}^{\infty}$ of $Y_{n+1} = H(Y_n,n,h)$ for $n \geq 0$ using initial value $Y_0$ satisfies 
\begin{equation}\label{eq:maintheorem1est}
\|Y_n - Z_n\| \leq G\gamma^n, \quad n \geq 0
\end{equation} 
where the sequence $\{Z_n\}_{n=0}^{\infty}$ is such that $Z_n = ((z_n^1)^T,\varphi(z_n^1,0,h)^T)^T$ and $Z_{n+1} = H(Z_n,n,h)$ for all $n \geq 0$.

\item If $h \in (0,h^*)$, then $\varphi(y,m)$ is globally Lipschitz and $C^{p+1}$ with respect to the state variable $y$.

\item  The difference equation $y_{n+1} = H_1(y_n,\varphi(y_n,0,h),n,h)$ where $H_1$ denotes the first $d$ components of $H$ defines a unique one-step method referred to as the underlying one-step method.  Let $\mathcal{F}^*_h$ be a finishing procedure defined by projecting a vector $Y :=(y^1,y^2) \in \mathbb{R}^{d} \times \mathbb{R}^{d(k-1)}$ onto its first $d$ components using the formula $\mathcal{F}_h^*(Y) = y^1$.  If $h \in (0,h^*)$, then for each underlying one-step method there exists a starting procedure $\mathcal{S}_h^*$ that takes the form $\mathcal{S}_h^*(x) = (x^T,\varphi(x,0,h)^T)^T$ such that the method $\mathcal{M}$ is of order $p$ relative to $\mathcal{S}_h^*$ and so that for any $x \in \mathbb{R}^d$ and $n \geq 0$ we have $\mathcal{S}_h^*(H_1(x,\varphi(x,0,h),n)) = H(\mathcal{S}_h^*(x),n,h)$ and $H_1(x,\varphi(x,0,h),n,h) = \mathcal{F}_h^*(H (\mathcal{S}_h^*(x),n,h))$.
\end{enumerate}
\end{theorem}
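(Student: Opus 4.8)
The plan is to dispatch conclusion~(1) as a well-posedness statement for the implicit stages, to obtain conclusions~(2)--(3) from nonautonomous invariant manifold theory applied to the $Y$-system, and to derive conclusion~(4) by transferring the order of $\mathcal{M}$ onto the reduced dynamics. For~(1), I would first show that $G_n$ is determined by $X_n$, $n$, and $h$: since $A$ is bounded and $N$ obeys \eqref{eq:lip}, $f$ is globally Lipschitz in $x$ uniformly in $t$, so for $h$ below a threshold set by $\|C\|$ and the Lipschitz constant of $f$ the stage map $G \mapsto (U\otimes I_d)X_n + h(C\otimes I_d)F_n$ (with $F_n$ depending on $G$ through the stages) is a uniform contraction on $\mathbb{R}^{dr}$. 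Banach's theorem then yields $G_n = G_n(X_n,n,h)$, the $n$-dependence entering only through $t_n$, and substitution into the second line of \eqref{eq:glm} defines $F$. The assumed $C^{p+1}$ regularity and bounded derivatives of $f$ give $F \in C^{p+1}$ via the implicit function theorem.

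For~(2)--(3) I would pass to $Y_n = ((z_n^1)^T,(z_n^2)^T)^T$ with $z_n^1 \in \mathbb{R}^d$, $z_n^2 \in \mathbb{R}^{d(k-1)}$, so that the linear part of $H$ is $E \otimes I_d = \mathrm{diag}(I_d,\, E_{2,2}\otimes I_d)$ and
\begin{align}
z_{n+1}^1 &= z_n^1 + h\, g_1(z_n^1,z_n^2,n,h),\\
z_{n+1}^2 &= (E_{2,2}\otimes I_d)z_n^2 + h\, g_2(z_n^1,z_n^2,n,h),
\end{align}
where $g_1,g_2$ are globally Lipschitz with constant proportional to that of $f$, uniformly in $n$ and $h$, since the $h(D\otimes I_d)F_n$ perturbation inherits the global bound \eqref{eq:lip}. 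Strict stability gives $\rho := \rho(E_{2,2}) < 1$, so the stable block contracts at rate $\rho + \mathcal{O}(h)$ against a neutral rate $1 + \mathcal{O}(h)$, and for small $h$ the gap $\rho + \mathcal{O}(h) < 1 - \mathcal{O}(h)$ persists. This is precisely the hypothesis of the nonautonomous invariant manifold theorems of \cite{aulbach1998,AWP2002,ARS,AW2003,NippStoffer1992}; after checking that the Lipschitz and gap constants are uniform in $n$ and $h \in (0,h^*)$, I would invoke them to obtain a unique bounded Lipschitz $\varphi$ with invariant graph, the attraction estimate \eqref{eq:maintheorem1est} with asymptotic phase $z_0^1$, and---using the $C^{p+1}$ smoothness of $H$---the $C^{p+1}$ regularity of $\varphi$ in its state variable. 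Uniqueness holds within the class carrying the Lipschitz bound delivered by the fixed point, which is the content of the remark following the theorem.

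Conclusion~(4) is then chiefly formal. The reduced recursion $z_{n+1}^1 = H_1(z_n^1,\varphi(z_n^1,n,h),n,h)$ is a one-step method, and both intertwining identities follow immediately from invariance of the graph of $\varphi$ together with $\mathcal{F}_h^*\circ\mathcal{S}_h^* = \mathrm{id}$. For order $p$ relative to $\mathcal{S}_h^*$, I would start from an order-$p$ starting procedure $\mathcal{S}_h$ supplied by the definition of local truncation error, show that the image of a smooth solution under $\mathcal{S}_h$ lies $\mathcal{O}(h^{p+1})$ from the manifold, and use uniqueness of $\varphi$ to conclude that $\mathcal{S}_h^*$ and $\mathcal{S}_h$ agree up to $\mathcal{O}(h^{p+1})$; since $F$ has an $\mathcal{O}(1)$ Lipschitz constant, the order-$p$ defect estimate for $\mathcal{S}_h$ then transfers to $\mathcal{S}_h^*$.

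The main obstacle I anticipate is this last step: reconciling the accuracy theory of the GLM with the purely dynamical construction of $\varphi$, that is, proving that the sampled exact solutions sit $\mathcal{O}(h^{p+1})$ from the invariant manifold and identifying their relaxed images with $\mathcal{S}_h^*$ via uniqueness. A second, pervasive difficulty is keeping every constant---the Lipschitz bounds, the spectral gap, and the attraction rate $\gamma$ in \eqref{eq:maintheorem1est}---uniform in the time index $n$, as the nonautonomous setting demands, so that single values of $h^*$, $G$, and $\gamma$ serve for all $t_0 > \tau_0$ and all $h \in (0,h^*)$.
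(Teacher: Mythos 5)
Your overall route coincides with the paper's: conclusion (1) by solving the implicit stage equations for small $h$ (the paper inverts the linear stage part explicitly and handles the nonlinear remainder with the implicit function theorem, which is interchangeable with your contraction argument), conclusions (2)--(3) by the change of variables to the block system \eqref{eq:glmlineq} and an appeal to nonautonomous invariant manifold theory (the paper's Theorem \ref{thm:naim}, taken from \cite{ARS2005a}, plus \cite{AWP2002} for the $C^{p+1}$ smoothness in conclusion (3)), and conclusion (4) by rerunning the argument of Theorem 2.3 of \cite{Stoffer1993}, which is exactly your plan of showing that $\mathcal{S}_h^*$ agrees with any order-$p$ starting procedure $\mathcal{S}_h$ up to $\mathcal{O}(h^{p+1})$.

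There is, however, one genuine gap. The invariant manifold theorems you propose to invoke---and the one the paper actually uses (Theorem \ref{thm:naim}, i.e.\ Theorems 3.1, 3.2 and 5.1 of \cite{ARS2005a}; likewise Theorem 4.1 of \cite{aulbach1998})---are formulated for difference equations defined for all $n \in \mathbb{Z}$. Your $Y$-system is defined only for $n \geq 0$: when $\tau_0 > -\infty$ the maps $R_1(\cdot,n,\cdot)$, $R_2(\cdot,n,\cdot)$ do not even make sense for $n$ sufficiently negative, because they would require evaluating $f$ at times $t_n + \xi_i h \leq \tau_0$. So, as written, the step ``invoke them'' fails: the hypotheses of the cited theorems are not satisfied by the system you hand them. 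The paper closes this by first extending the difference equation to $\mathbb{Z}$, setting $R_i(\cdot,n,\cdot) \equiv 0$ for $n < 0$, and only then applying Theorem \ref{thm:naim}. This is not a cosmetic point, because the extension is also the true source of the uniqueness caveat: $\varphi$ is unique only once a definite extension to $\mathbb{Z}$ has been fixed, and different extensions produce different invariant graphs. Your explanation of the remark following the theorem---that uniqueness holds ``within the class carrying the Lipschitz bound delivered by the fixed point''---misidentifies this; the remark concerns the $\mathbb{N}$-versus-$\mathbb{Z}$ issue, not a restriction to a Lipschitz class. Once you add the zero extension for $n < 0$ and verify the gap condition \eqref{eq:specgap} with $\beta = 1$, $\alpha$ determined by $\rho(E_{2,2})<1$, and $L = hJ$, the rest of your outline goes through essentially as in the paper.
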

We first remark that $\varphi$ and the underlying one-step method are not uniquely defined unless we agree to extend the difference equations defined by applying $\mathcal{M}$ to solve \eqref{eq:lininhomo2} from $\mathbb{N}$ to $\mathbb{Z}$ in a unique way.  This is because the uniqueness of the function $\varphi$ whose graph defines the pseudo-unstable manifold of a nonautonomous difference equation satisfying a gap condition relies on the difference equation being defined on all of $\mathbb{Z}$ rather than merely $\mathbb{N}$ (see Theorem 4.1 of \cite{aulbach1998}).

We also remark that the assumption that $N(x,t)$ satisfies the global Lipschitz condition \eqref{eq:lip}, which is quite strong, is not essential for our results and is used for simplicity.  In general (see Remark 2.7 (2) of \cite{PotzscheRasmussen2008}) all we need is that $(N(y,t)-N(x,t))/\|y-x\| \rightarrow 0$ as $y \rightarrow x$ uniformly for $t \geq t_0$.

The remainder of this section is dedicated to the proof of Theorem \ref{thm:maintheorem1}.  Let $t_0 > \tau_0$ and $X_0 \in \mathbb{R}^{dk}$.  The method $\mathcal{M}$ applied to solve \eqref{eq:lininhomo2} with step-size $h > 0$ using the initial value $X_0$ at the initial time $t_0 > s$ takes the form 
\begin{equation}\label{eq:glmintstage}
\left\{\begin{array}{lcr}
G_n = (U \otimes I_d)X_n +h(C \otimes I_d)M_n G_n  + h (C \otimes I_d) \overline{N}_n\\
X_{n+1} = (V \otimes I_d)X_n +h(D \otimes I_d)M_n G_n +h(D \otimes I_d)\overline{N}_n
\end{array}\right.
\end{equation}
where $M_n = \text{diag}(A_{n,1},\hdots,A_{n,r}) \in \mathbb{R}^{dr \times dr}$, \newline
${\overline{N}_n =(N(g_{n,1},t_n+\xi_1 h)^T,\hdots,N(g_{n,r},t_n+\xi_r h)^T)^T}$, and $A_{n,i} = A(t_n+\xi_ih)$ for $i=1,\hdots,r$ where $t_n :=t_0 + nh$.  The equation \eqref{eq:glmintstage} implies that the internal stages $G_n$ satisfy the following algebraic condition
\begin{equation}\label{eq:exstage}
G_n = [I_{dr}-h(C \otimes I_d) M_n]^{-1}(U \otimes I_d)X_n + h[I-h(C \otimes I_d) M_n]^{-1}(C \otimes I_d)\overline{N}_n.
\end{equation}
The implicit function theorem and the fact that $f(x,t)=A(t)x+N(x,t)$ is at least $C^2$ (since $p \geq 1$) then implies that there exists $h^* > 0$ so that $h \in (0,h^*)$, then 
\begin{equation}\label{eq:glmdiffeqn}
X_{n+1} =  (V \otimes I_d)X_n  + R(X_n,n,h)
\end{equation}
where (because of \eqref{eq:lip}) the term $R(X,t,h)$ is Lipschitz in $X_n$ with Lipschitz constant $L_R=L_R(h)$ bounded as $L_R(h) \leq h J'$ for some constant $J' > 0$.  Therefore the first conclusion of Theorem \ref{thm:maintheorem1} is proved.  If we write $Y_n=((y_n^1)^T,(y_n^2)^T)^T$ where $y_n^1 \in \mathbb{R}^d$ and $y_n^2 \in \mathbb{R}^{d(k-1)}$, then under the change of variables $X_{n} = (P \otimes I_d)Y_n$ the resulting system $Y_{n+1} := H(Y_n,n,h)$ can be expressed as 
\begin{equation}\label{eq:glmlineq}
\left\{\begin{array}{lcr}
y_{n+1}^1 = y_n^1 + R_1(Y_n,n,h) \\
y_{n+1}^2 = (E_{2,2} \otimes I_d) y_n^2 + R_2(Y_n,n,h) 
\end{array}\right.
\end{equation}
where $R_1$ and $R_2$ each have Lipschitz constants $L_{R_1} = L_{R_1}(h)$ and $L_{R_1}=L_{R_2}(h)$ bounded by $hJ$ where $J \leq \|P^{-1} \otimes Id\|J' \|P\otimes I_d\|$.  The following is an invariant manifold theorem for difference equations of the form \eqref{eq:glmlineq} and is a restatement of the conclusions of Theorem 3.1, Theorem 3.2, and Theorem 5.1 in \cite{ARS2005a} (See also \cite{ARS} and \cite{aulbach1998}).  It is included for completeness.
\begin{theorem}\label{thm:naim}
Consider a system of difference equations of the form
\begin{equation}\label{eq:glmlineqform}
\left\{\begin{array}{lcr}
x_{n+1} = A_n x_n + F_1(n,x_n,y_n)\\
y_{n+1} = B_n y_n + F_2(n,x_n,y_n)
\end{array}\right., \quad n \in \mathbb{Z}
\end{equation}
where $A_n \in \mathbb{R}^{d_1 \times d_1}$, $B_n \in \mathbb{R}^{d_2 \times d_2}$, and $F_i:\mathbb{Z}\times \mathbb{R}^{d_1} \times \mathbb{R}^{d_2} \rightarrow \mathbb{R}^{d_i}$ for $i=1,2$ where 
\begin{equation}\label{eq:lingap}
\begin{array}{lcr}
\|\prod_{j=n}^{m} A_j^{-1}\| \leq K \beta^{n-m}, \quad  n \leq m\\
\|\prod_{j=m}^{n} B_j\| \leq K \alpha^{n-m}, \quad n \geq m 
\end{array}
\end{equation}
and
\begin{equation}
\begin{array}{lcr}
\|F_1(n,x_n,y_n)-F_2(n,\tilde{x}_n,\tilde{y}_n)\| \leq L\|x_n-\tilde{x}_n\| + L \|y_n-\tilde{y}_n\| \\
\|F_2(n,x_n,y_n)-F_2(n,\tilde{x}_n,\tilde{y}_n\| \leq  L\|x_n-\tilde{x}_n\| + L \|y_n-\tilde{y}_n\|
\end{array}
\end{equation}
for constants $L > 0$, $K \geq 1$ and $0 < \alpha < \beta$ satisfying the following conditions
\begin{equation}\label{eq:specgap}
0 < L < \frac{\beta-\alpha}{4K}(2+K - \sqrt{4+K^2}), \quad c(\alpha+2KL)  < 1 < c(\beta-2KL)
\end{equation}
for some $c > 0$.  Denote the solution of \eqref{eq:glmlineqform} with the initial condition $z_m=\left[\begin{array}{c} x_m \\y_m\end{array} \right]$ at initial time $m$ as
\begin{equation}\label{eq:glmlineqsol}
z(n;m,x_m,y_m) = \left[\begin{array}{c} x(n;m,x_m,y_m) \\y(n;m,x_m,y_m)\end{array}\right]
\end{equation}
Then there exists a unique continuous map $\varphi:\mathbb{R}^{d_1}\times \mathbb{Z}  \rightarrow \mathbb{R}^{d_2}$ whose graph is the manifold 
$$\mathcal{D}=\{(m,x,\varphi(x,m)): m \in \mathbb{Z}, x\in \mathbb{R}^{d_1}\}$$
and $\mathcal{D}$ is invariant under the discrete flow of \eqref{eq:glmlineqform}.  Additionally, $\mathcal{D}$ is globally exponentially attracting in the sense that for any $m \in \mathbb{Z}$, $z_m= (x_m,y_m) \in \mathbb{R}^{d_1} \times \mathbb{R}^{d_2}$ there exists $(m,w_m,\varphi(w_m,m)) \in \mathcal{D}$, $G > 0$ and $\gamma \in (0,1)$ so that 
\begin{equation}
\|z(n;m,x_m,y_m)-z(n;m,w_m,\varphi(w_m,m))\| \leq G\gamma^{n-m}, \quad n \geq m
\end{equation} \qed
\end{theorem}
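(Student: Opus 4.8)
The plan is to prove Theorem \ref{thm:naim} by the Lyapunov--Perron (summation-equation plus contraction-mapping) method, which is the natural tool for an invariant-manifold result governed by a spectral-gap/exponential-dichotomy hypothesis \eqref{eq:lingap} with small globally Lipschitz nonlinearities. The geometry to keep in mind is that the $x$-direction expands forward (its inverse cocycle contracts, by the first bound in \eqref{eq:lingap}) while the $y$-direction contracts forward (second bound), with $\alpha<\beta$. Hence the sought graph $y=\varphi(x,m)$ is the \emph{pseudo-unstable} manifold carrying the dominant dynamics, and it is attracting because deviations transverse to it, measured in the contracting $y$-direction, die out in forward time --- exactly the behaviour needed in the GLM application, where $x$ plays the role of the eigenvalue-$1$ mode $y^1$ and $y$ the spurious modes. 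Because the statement is a restatement of results of \cite{ARS2005a} (see also \cite{aulbach1998}), the goal here is to reconstruct the fixed-point scheme and indicate where the quantitative gap condition \eqref{eq:specgap} enters, citing \cite{ARS2005a} for the full estimates.

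First I would fix an intermediate rate $\gamma\in(\alpha,\beta)$ --- its existence, and the admissible band, being precisely what \eqref{eq:specgap} (through the scaling constant $c$) guarantees --- and introduce, for each $m\in\mathbb{Z}$, the Banach space of sequences $(z_n)_{n\le m}$ with finite weighted norm $\sup_{n\le m}\gamma^{m-n}\|z_n\|$. For fixed $\xi\in\mathbb{R}^{d_1}$ I would then characterize a complete backward solution of \eqref{eq:glmlineqform} through $x_m=\xi$ that lies in this weighted space as a fixed point of the Lyapunov--Perron operator $T$ whose components read, schematically,
\[
(Tz)^x_n=\Phi^A(n,m)\,\xi-\sum_{k=n}^{m-1}\Phi^A(n,k+1)\,F_1(k,x_k,y_k),\qquad (Tz)^y_n=\sum_{k=-\infty}^{n-1}\Phi^B(n,k+1)\,F_2(k,x_k,y_k),
\]
where $\Phi^A$ and $\Phi^B$ are the transition operators assembled from the matrix products appearing in \eqref{eq:lingap}. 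The two bounds in \eqref{eq:lingap} make both sums converge in the weighted norm: the $x$-sum is controlled by a geometric series in $\gamma/\beta<1$ and the $y$-sum by one in $\alpha/\gamma<1$. Next I would verify that $T$ is a uniform contraction. Its Lipschitz constant is $L$ multiplied by the two geometric sums above, and the smallness inequality for $L$ in \eqref{eq:specgap} --- including the factor $2+K-\sqrt{4+K^2}$ --- is exactly the condition forcing this product below $1$ uniformly in $m$ and $\xi$. The unique fixed point $z_n(\xi,m)$ then yields a well-defined value $\varphi(\xi,m):=y_m(\xi,m)$, and uniqueness of the fixed point, together with uniqueness of $\gamma$-bounded complete solutions, gives uniqueness of the invariant graph $\mathcal{D}$.

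Invariance follows from the shift-invariance of the defining weighted-boundedness property, and Lipschitz and continuous dependence of $\varphi$ on $(\xi,m)$ come from the uniform contraction principle applied to $T$ as a function of the parameter $\xi$; the $C^{p+1}$ regularity exploited in the GLM application (Theorem \ref{thm:maintheorem1}) then follows from smoothness of the $F_i$ via the fiber-contraction theorem applied to the formally differentiated operators.

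I expect the global exponential attraction to be the main obstacle. Here, given an arbitrary $z_m=(x_m,y_m)$, one must produce the \emph{asymptotic phase} $w_m$ with $(m,w_m,\varphi(w_m,m))\in\mathcal{D}$ and show the two forward orbits converge at a uniform rate $G\gamma^{n-m}$. This requires a second Lyapunov--Perron fixed point, now on forward sequences measuring the difference of the two orbits, together with a simultaneous solve for $w_m$ that suppresses the expanding-direction component of the difference; controlling the interplay of the expanding and contracting directions along two trajectories and extracting a rate uniform in $m$ is precisely where the full strength of \eqref{eq:specgap} (both inequalities and the constant $c$) is needed. Since all of these quantitative estimates are carried out in \cite{ARS2005a}, I would at this point defer to Theorems 3.1, 3.2 and 5.1 there for the attraction bound and the sharp constants, having given the self-contained construction of $\varphi$ above.
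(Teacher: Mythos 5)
The paper does not actually prove Theorem \ref{thm:naim}: it is stated as a restatement of Theorems 3.1, 3.2, and 5.1 of \cite{ARS2005a} and closed without argument, so the paper's ``proof'' is exactly the citation you yourself invoke. Your Lyapunov--Perron sketch (weighted backward-sequence spaces, the backward variation-of-constants operator, geometric-series bounds with ratios $\gamma/\beta$ and $\alpha/\gamma$, and the gap condition \eqref{eq:specgap} forcing uniform contraction) is a correct outline of how those cited results are established, and since you defer to the same theorems of \cite{ARS2005a} for the asymptotic-phase/attraction estimates, your proposal is correct and essentially coincides with the paper's treatment.
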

We use Theorem \ref{thm:naim} to complete the proof of Theorem \ref{thm:maintheorem1}.  There exists $h_1^* > 0$ so that if $h \in (0,h_1^*)$, then $X_n$ satisfies the difference equation \eqref{eq:glmdiffeqn}.  The matrix sequence $\{Y_n\}_{n=0}^{\infty}$ where $Y_n = (P^{-1} \otimes I_d)X_n$ satisfies the difference equation \eqref{eq:glmlineq}.  If \eqref{eq:lininhomo2} is not defined on all of $\mathbb{Z}$ (i.e. $s > -\infty)$, then we uniquely extend the difference equation \eqref{eq:glmlineq} satisfied by $\{Y_n\}_{n=0}^{\infty}$ that is defined on $\mathbb{N}$ to a difference equation defined on all of $\mathbb{Z}$ by setting $R_i(\cdot,n,\cdot) \equiv 0$ for $i=1,2$ whenever $n < 0$.  Since the eigenvalues of $E_{2,2}$ all have modulus strictly less than $1$ this extended difference equation on $\mathbb{Z}$ is of the form \eqref{eq:glmlineqform} for $\alpha < \beta = 1$ and $L=hJ$. Thus, we can choose $c > 0$ and $h^* \in (0,h_1^*]$ so that the inequalities \eqref{eq:specgap} are satisfied whenever $h \in (0, h^*)$.  So, there exists a continuous map $\varphi:\mathbb{R}^d \times\mathbb{Z} \times (0,h^*) \rightarrow \mathbb{R}^{d(k-1)}$ that is invariant under the flow of $H$ and such that if $Y_n$ is the solution of \eqref{eq:glmlineq}, then there exists a sequence $\{z_n^1\}_{n=0}^{\infty}$ with $z_n^1 \in \mathbb{R}^{d}$, $G > 0$, and $\gamma\in (0,1)$  such that
\begin{equation}\label{eq:onestep_fin}
\begin{array}{c}
z_{n+1}^1 = z_{n}^1 + R_1(z_n^1,\varphi(z_n^1,0,h),n,h) \equiv H_1(z_n^1,n,h)\\
  \|Y_n-(z_n^1,\varphi(z_n^1,0,)^T)^T\| \leq G \gamma^{n}, \quad n \geq 0.
\end{array}
\end{equation}
This completes the proof of the second conclusion of Theorem \ref{thm:maintheorem1}.  Conclusion 3 follows from the results of \cite{AWP2002}.  The fourth conclusion is proved by repeating the proof of Theorem 2.3 in \cite{Stoffer1993} using the function $\varphi(y,0)$, Conclusion 3, and the definition of local truncation error for GLMs. \qed

\subsection{Nonautonomous stability of general linear methods}\label{sec:mainresults2}

In this section we combine the results of Theorems \ref{thm:onesteptheorem} and \ref{thm:maintheorem1} to prove a stability result for the solution of a nonautonomous linear ODE by a strictly stable GLM.  Consider the ODE
\begin{equation}\label{eq:lin}
\dot{x} = A(t)x, \quad t > \tau_0
\end{equation}
where $A:(t_0,\infty) \rightarrow \mathbb{R}$ and $\tau_0 \geq -\infty$.  The following theorem states that if the step-size of a GLM satisfying the hypotheses of Theorem \ref{thm:maintheorem1} solving the linear ODE \eqref{eq:lin} is sufficiently small, then the exponential stability/instability of numerical solutions of \eqref{eq:lin} found with the GLM are determined by the stability spectra its underlying one-step method approximates.  Notice, however, that we are unable to show uniform exponential stability/instability of the solution found with the GLM.
\begin{theorem}\label{thm:maintheorem2}
Suppose that the coefficient matrix $A(t)$ of the nonautonomous linear ODE \eqref{eq:lin} is bounded and $C^{p+1}$.  Assume that the method \eqref{eq:glm} denoted by $\mathcal{M}$ is strictly stable and has local truncation error of order $p \geq 1$.  Let $X_n:=X(n;X_0,t_0,h)$ denote the output of $\mathcal{M}$ applied to solve \eqref{eq:lin} using step-size $h > 0$, initial time $t_0 > \tau_0$, and initial value $X_0 \in \mathbb{R}^{dk}$.  Denote the Sacker-Sell spectrum of \eqref{eq:lin} by $\Sigma_{ED}$. 
\begin{enumerate}
\item If $\Sigma_{ED} \cap [0,\infty) = \emptyset$, then for each initial value $X_0$ there exists $h^* > 0$, $G > 0$, and $\gamma \in (0,1)$ so that if $h \in (0,h^*)$, then  $\|X(n;X_0,t_0,h)\| \leq G \gamma^{n}$.  

\item If $\Sigma_{ED} \cap [0,\infty) \neq \emptyset$, then there exists $h^* > 0$, $G > 0$, and $\gamma > 1$ so that if $h \in (0,h^*)$, then  $\|X(n;X_0,t_0,h)\| \geq G \gamma^{n}$ for some initial value $X_0$. 

\end{enumerate}
An analogous result holds for the Lyapunov spectrum of \eqref{eq:lin} if we assume that the ODE has an integral separation structure. 
\end{theorem}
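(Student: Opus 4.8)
The plan is to transfer the spectral information from the continuous ODE \eqref{eq:lin} to the full GLM in two stages: first down to the underlying one-step method supplied by Theorem \ref{thm:maintheorem1}, using Theorem \ref{thm:onesteptheorem}, and then back up from the one-step method to the GLM output through the attracting invariant manifold.

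First I would apply Theorem \ref{thm:maintheorem1} to \eqref{eq:lin}, which is the special case $N \equiv 0$ of \eqref{eq:lininhomo2} (so the Lipschitz hypothesis \eqref{eq:lip} holds trivially). Because the ODE is linear, the recursion \eqref{eq:glmdiffeqn} and its transform \eqref{eq:glmlineq} are linear in $Y_n$, and the invariant manifold must be the graph of a map $\varphi(\cdot,n,h)$ that is itself linear: the candidate linear graph $\{(y^1,\Lambda(n,h)y^1)\}$ is invariant with the required attraction and gap properties, so by the uniqueness in Theorem \ref{thm:naim} it coincides with the graph of $\varphi$. Consequently the underlying one-step method of conclusion (4) is linear, $z_{n+1}^1 = H_1(z_n^1,n,h) = \Phi^A(n;h)z_n^1$, has local truncation error of order $p \ge 1$, and satisfies $\|Z_n\| \le C\|z_n^1\|$ with $Z_n$ as in \eqref{eq:maintheorem1est}. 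This places us exactly in the setting of Theorem \ref{thm:onesteptheorem}, so the endpoints of the discrete Sacker-Sell spectrum $\Sigma_{ED}^A$ of $z_{n+1}^1 = \Phi^A(n;h)z_n^1$ approximate those of $\Sigma_{ED}$ to within any prescribed $\varepsilon$ once $h$ is small (and to order $\mathcal{O}(h^p)$ for the Lyapunov endpoints under the integral separation hypothesis, via part (2) of Theorem \ref{thm:onesteptheorem}).

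For the stability assertion (1), suppose $\Sigma_{ED}\cap[0,\infty)=\emptyset$, so the largest right endpoint is $\beta_{\max} = -2\alpha_0 < 0$. Choosing $\varepsilon = \alpha_0$ and $h$ small, Theorem \ref{thm:onesteptheorem} gives $\beta_{\max}^A < -\alpha_0 < 0$; the discrete Sacker-Sell spectrum of the underlying one-step method lies strictly left of zero, so $\{z_n^1\}$ decays exponentially, $\|z_n^1\| \le \tilde{G}\tilde{\gamma}^n$ with $\tilde{\gamma}\in(0,1)$. Combining this with the manifold attraction estimate \eqref{eq:maintheorem1est}, $\|Y_n - Z_n\|\le G\gamma^n$, and with $\|Z_n\|\le C\|z_n^1\|$ yields $\|Y_n\| \le G''\hat{\gamma}^n$ for $\hat{\gamma} = \max(\gamma,\tilde{\gamma})\in(0,1)$; since $X_n = (P\otimes I_d)Y_n$ with $P$ fixed, $\|X_n\|\le G\hat{\gamma}^n$ after relabeling constants. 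For the instability assertion (2), if $\Sigma_{ED}\cap[0,\infty)\neq\emptyset$ then (in the substantive case) $\beta_{\max} > 0$, and $\varepsilon < \beta_{\max}$ gives $\beta_{\max}^A > 0$; thus there is $z_0^1$ in the growing subbundle of the one-step method with $\|z_n^1\|\ge c\check{\gamma}^n$, $\check{\gamma} > 1$. Taking the initial value on the manifold, $X_0 = (P\otimes I_d)((z_0^1)^T,\varphi(z_0^1,0,h)^T)^T$, forces $Y_n = Z_n$ for all $n$, so $z_n^1$ is the first-block component of the fixed bounded linear image $(P\otimes I_d)^{-1}X_n$ and hence $\|X_n\| \ge c'\|z_n^1\| \ge G\check{\gamma}^n$. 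The Lyapunov analog is identical with $\beta_{\max},\Sigma_{ED}$ replaced by $\mu_{\max},\Sigma_L$ and with the $\mathcal{O}(h^p)$ approximation of part (2) of Theorem \ref{thm:onesteptheorem} under integral separation.

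The main obstacle is the passage from the one-step method back to the GLM, and in particular explaining why only \emph{non-uniform} exponential stability results even though the underlying one-step method can be made uniformly exponentially stable in the Sacker-Sell case. The attraction estimate \eqref{eq:maintheorem1est} and the base-point correspondence $X_0 \mapsto z_0^1$ are forward-in-time statements tied to the fixed initial time $t_0$ and initial value $X_0$, with constants $G,\gamma$ and threshold $h^*$ that depend on $X_0$; they do not furnish the two-point bounds uniform over all shifted initial times $s$ that uniform exponential stability would require. This dependence is precisely what the phrase ``for each initial value $X_0$ there exists $h^*$'' records. A secondary delicate point is the boundary case $\beta_{\max}=0$ (resp. $\mu_{\max}=0$): there the $\varepsilon$- (resp. $\mathcal{O}(h^p)$-) approximation of the discrete endpoint does not by itself decide the sign of $\beta_{\max}^A$, so the clean growth conclusion genuinely uses $\beta_{\max}>0$ and this borderline must be argued separately or excluded.
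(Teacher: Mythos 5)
Your proposal follows the same route as the paper's own proof: invoke Theorem \ref{thm:maintheorem1} to obtain the underlying one-step method $z_{n+1}^1 = \Phi^A(n;h)z_n^1$ of order $p$, transfer the spectral hypothesis to it via Theorem \ref{thm:onesteptheorem}, and then combine the resulting exponential decay of $z_n^1$ (hence of $Z_n$, using the Lipschitz/linear structure of $\varphi$) with the attraction estimate \eqref{eq:maintheorem1est} and the boundedness of $P \otimes I_d$ to bound $\|X_n\|$. Your extra observations---the linearity of $\varphi$ for linear ODEs, the explicit treatment of the unstable case by starting on the invariant manifold, and the caveat about the borderline case where the right spectral endpoint equals zero (which the paper, proving only conclusion 1 in detail and declaring conclusion 2 ``very similar,'' never addresses)---refine but do not diverge from the paper's argument.
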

\begin{proof}
We prove the first conclusion since the proof of the second is very similar.  Let $X_0 \in \mathbb{R}^{dk}$ be some initial condition at the fixed initial time $t_0$.  Since $A(t)$ is bounded and $C^{p+1}$ and $\mathcal{M}$ is strictly stable and has local truncation error of order $p \geq 1$, we can choose $h_1^* > 0$ so small that the four conclusions of Theorem \ref{thm:maintheorem1} hold for $h \in (0,h_1^*)$.  The first conclusion of Theorem \ref{thm:maintheorem1} implies that $X_{n+1} = F(X_n,n,h)$ for some function $F$ and the second conclusion of \ref{thm:maintheorem1} implies that there exists $G_1 > 0$, $\gamma_1 \in (0,1)$, and $\varphi:\mathbb{R}^{d}\times \mathbb{Z} \times (0,h_1^*) \rightarrow \mathbb{R}^{d(k-1)}$ so that 
$$\|(P^{-1} \otimes I_d)X_n - Z_n\| \leq G_1 \gamma_1^n, \quad n \geq 0$$
where $P$ is as defined in Section \ref{sec:mainresults1} and $Z_n = ((z_n^1)^T,(\varphi(z_n^1,0,h))^T)^T$ is a solution of $Z_{n+1} = H(Z_n,n,h)$ with $H$ and $\varphi$ defined as in Theorem \ref{thm:maintheorem1}.  The fourth conclusion implies that $z_{n+1}^1 = H_1(z_n^1,\varphi(z_n^1,0,h),n,h)$, where $H_1$ is the first $d$ components of $H$, defines a one-step approximation with local truncation error of order $p$ to $\dot{x}=A(t)x$ with initial condition $z_0^1$.  We therefore can write $z_{n+1}^1 = H_1(z_n^1,\varphi(z_n^1,0,h),n,h) \equiv \Phi^A(n;h)z_n$.  Theorem \ref{thm:onesteptheorem} then implies that there exists $h_2^*  \in (0,h_1^*]$ so that if $h \in (0,h_2^*)$ then the Sacker-Sell spectrum of $z_{n+1} = \Phi^A(n;h)z_n$ is bounded above by zero and therefore  
\begin{equation}\label{eq:maintheorem2.1}
\|z_n^1 \| \leq G_2 \gamma_2^{n-m}\|z_m^1\|, \quad n \geq m \geq 0
\end{equation}
for some $G_2 > 0$ and $\gamma_2 \in (0,1)$.  By the work in the previous section, there exists $h_3^* \in (0,h_2^*]$ so that if $h \in (0, h_3^*)$, then $F(X_n,n,h) = \Phi(n,h)X_n$ and $H(Y_n,n.h) = (P^{-1} \otimes I_d)\Phi(n;h)(P \otimes I_d)Y_n$ where
$$\Phi(n;h) = (V \otimes I_d) + h (D\otimes I_d)M_n[I-h (C \otimes I_d) M_n]^{-1}$$
and $\Phi(n;h)$ is bounded and invertible with $M_n$ as defined in Equation \eqref{eq:glmlineq}.  The third conclusion of Theorem \ref{thm:maintheorem1} implies that there exists $h_3^* \in (0,h_2^*]$, $G_3 > 0$, and $\gamma_3 \in (0,1)$ so that if $h \in (0,h_3^*)$, then 

% add why the third conclusion implies this as a comment

$$\|Z_n\| \leq G_3 \gamma_3^{n-m}\|Z_m\|, \quad n \geq 0.$$
Take $h^* = \text{min}\{h_1^*,h_2^*,h_3^*\}$.  If $h \in (0,h^*)$ and $n \geq 0$, then 
$$\|X_n \| \leq \|(P\otimes I_d)\|\left( \|(P^{-1}\otimes I_d)X_n-Z_n\| + \|Z_n\|\right) \leq \|(P\otimes I_d)\|\left(G_1 \gamma_1^n + G_3\|Z_0\| \gamma_3^n\right).$$
The result follows by taking $G = \|P\otimes I_d\|\text{max}\{G_1,G_3\|Z_0\|\}$ and $\gamma = \text{max}\{\gamma_1,\gamma_3\}$.
\end{proof}
Various types of scalar test equations are often used to characterize the stability properties of GLMs solving ODE IVPs.  In \cite{steyerthesis} and \cite{SVV1} it is shown that the stability of the numerical solution  by a one-step method with local truncation error of order $p \geq 1$ of a nonautonomous linear ODE with a bounded and sufficiently smooth coefficient matrix can be approximately characterized by the one-step method applied to $d$ scalar test equations of the form 
\begin{equation}\label{eq:testeq2}
\dot{x}=\lambda(t)x, \quad t > t_0
\end{equation}
where $\lambda:(t_0,\infty)\rightarrow \mathbb{R}$ is the real-valued diagonal element of a matrix $B(t)$ of a corresponding upper triangular system $\dot{y}=B(t)y$ to \eqref{eq:lin}.  Theorem \ref{thm:maintheorem2} justifies using such test equations to characterize the stability of strictly stable GLMs solving nonautonomous linear ODEs by passing to the approximation properties of the underlying one-step method.

Theorem \ref{thm:maintheorem2} is an asymptotic result showing that as $h \rightarrow 0$ we can guarantee the exponential decay of the numerical solution of a nonautonomous linear ODE whose Lyapunov or Sacker-Sell spectrum lies to the left of zero.  It is natural to look for a subset of A-stable methods that preserve the asymptotic decay of all such linear ODEs with no restriction on $h$.  The following theorem partially answers this question and says that step-size restriction is essential for the preservation of asymptotic decay by strictly stable linear multistep and Runge-Kutta methods.
\begin{theorem}\label{thm:counterexample}
Given any strictly stable and consistent linear multistep method or convergent Runge-Kutta method $\mathcal{M}$ and $h > 0$, there exists a uniformly exponentially stable scalar ODE $\dot{x}=\lambda(t)x$ such that the numerical solution $\{x_n\}_{n=0}^{\infty}$ by $\mathcal{M}$ with step-size $h > 0$ becomes unbounded as $n \rightarrow \infty$ for any initial condition $x(0) = x_0 \neq 0$.
\end{theorem}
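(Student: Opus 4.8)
The plan is to exploit the fact that, for a \emph{fixed} step-size $h$, the method only samples the coefficient $\lambda$ at finitely many points per period: a linear multistep method sees $\lambda$ only at the grid points $t_m = mh$, and a Runge--Kutta method only at the stage points $t_n + c_i h$. I would therefore take $\lambda$ to be $h$-periodic and arrange it to be \emph{positive} exactly at the points the method samples, so that the discretization is driven by a growth-inducing coefficient, while giving $\lambda$ deep negative troughs elsewhere so that its average over one period is some $-\ep_0 < 0$. The negative average immediately yields uniform exponential stability of the continuous problem: writing $X(t) = \exp\!\big(\int_{t_0}^t \lambda\big)$, periodicity gives $\int_s^t \lambda(\tau)\,d\tau \le -\ep_0(t-s) + C$ with $C$ absorbing the partial periods, so $|X(t)| \le e^{C} e^{-\ep_0 (t-s)} |X(s)|$ for $t \ge s$.

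For a convergent Runge--Kutta method I would make $\lambda$ equal to a small constant $p > 0$ at every stage point; since the $t_n + c_i h$ are congruent modulo $h$ for all $n$, these reduce to finitely many positions in one period. Then $\lambda(t_n + c_i h) = p$ for all $n$ and $i$, so the stage and update equations coincide with those of the autonomous problem $\dot x = px$ and the numerical solution obeys $x_{n+1} = R(hp)\, x_n$, where $R$ is the stability function. Convergence forces consistency, hence $R(z) = 1 + z + \mathcal{O}(z^2)$ and $R(hp) > 1$ for $p$ small; therefore $x_n = R(hp)^n x_0 \to \infty$ for \emph{every} $x_0 \ne 0$. (The same picture explains why an explicit method already admits the cruder counterexample of a large negative constant $\lambda$ lying outside its bounded stability region, whereas $A$-stable implicit methods genuinely need the oscillatory construction.)

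For a strictly stable, consistent linear multistep method, $h$-periodicity forces $\lambda(t_m) = \lambda(0) =: p$ at every grid point, because the grid points are integer multiples of $h$. Choosing $p > 0$, the recurrence becomes the constant-coefficient relation with characteristic polynomial $a(\zeta) - hp\, b(\zeta)$, where $a$ and $b$ denote the two generating polynomials of $\mathcal{M}$. Consistency ($a(1) = 0$, $a'(1) = b(1)$) makes the principal root satisfy $\zeta_1(hp) = 1 + hp + \mathcal{O}((hp)^2) > 1$, while strict stability keeps the remaining $k-1$ roots strictly inside the unit disk for $p$ small, so $\zeta_1$ is the unique root of largest modulus and the solution grows like $\zeta_1^n$ \emph{once the principal mode is excited}.

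The main obstacle is precisely this last proviso: in contrast to the one-step case, a multistep recurrence has a $(k-1)$-dimensional stable subspace, and I must ensure the starting vector $(x_0,\dots,x_{k-1})$ does not lie in it for any $x_0 \ne 0$. I would resolve this by also taking the period-average $-\ep_0$ small, so that the exact solution barely decays over the first $k-1$ steps and any consistent starting procedure returns a vector close to $x_0(1,\dots,1)^T$; since $\zeta_1 \to 1$ as $p \to 0$, the unstable eigenvector of the companion matrix tends to $(1,\dots,1)^T$, and because the eigenvalue near $1$ is simple its left and right eigenvectors pair nontrivially. Hence the coefficient of $\zeta_1^n$ is a nonzero multiple of $x_0$ once $p$ is chosen small relative to the given $h$, forcing $|x_n| \to \infty$. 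Choosing $p$ and $\ep_0$ small and the troughs sufficiently deep then delivers, in either case, a uniformly exponentially stable $\dot x = \lambda(t)x$ whose numerical solution blows up.
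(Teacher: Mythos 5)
Your proof is correct and follows the same core strategy as the paper's: take $\lambda$ to be $h$-periodic with negative mean (uniform exponential stability then follows from $\int_s^t\lambda\,d\tau\le-\varepsilon_0(t-s)+C$) but positive at the points the method actually samples, so the discrete dynamics coincide with those of an autonomous problem $\dot x=px$ whose small positive $hp$ lies just outside the closed stability region near the origin. The paper realizes this with the single formula $\lambda(t)=D\cos(2\pi t/h)+L$, with $L<0<D+L<\delta/2$, and asserts that the numerical solution then equals that of $\dot x=(D+L)x$. Where you differ is precisely at the two places that assertion needs justification, and both differences are improvements. First, the paper's claim is exact for linear multistep methods, which sample only at grid points, but it fails for Runge--Kutta methods with non-integer abscissae: the implicit midpoint rule, for instance, samples the paper's $\lambda$ only at $t_n+h/2$, where it equals $-D+L<0$, so that numerical solution actually decays; your device of pinning $\lambda=p$ at \emph{all} stage positions $c_ih$ modulo $h$ and digging the troughs in between repairs exactly this defect. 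Second, the paper never addresses the fact that for a $k$-step method a root $\zeta_1>1$ of $\rho-hp\sigma$ forces blow-up only if the starting vector excites the principal mode; your spectral-projection argument (starting data near $x_0(1,\dots,1)^T$, right eigenvector $(1,\zeta_1,\dots,\zeta_1^{k-1})^T\to(1,\dots,1)^T$ as $p\to0$, nonzero left--right eigenvector pairing for a simple eigenvalue) supplies this missing step, at the mild price of assuming the starting procedure returns values close to the exact solution for the fixed $h$ --- some such convention is unavoidable, since adversarial starting values placed in the $(k-1)$-dimensional stable subspace would falsify the literal ``for any $x_0\neq0$'' claim.
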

\begin{proof}
Let $\mathcal{S}$ denote the linear stability domain of $\mathcal{M}$ and let $\partial \mathcal{S}$ denote its boundary.  Since $\mathcal{M}$ is a strictly stable linear multistep method or a convergent Runge-Kutta method it follows that there exists $\delta > 0$ such that $(0,\delta) \notin \mathcal{S} \cup \partial \mathcal{S}$.  Consider the ODE $\dot{x}=(D\cos( \omega t)+L)x \equiv \lambda(t)x$ where $t > 0$, $0 < D +L < \delta/2$, $L < 0$ and $\omega = 2\pi / h$ and let $x(0)=x_0 \neq 0$.  Notice that $L < 0$ implies that zero is uniformly exponentially stable for $\dot{x} = \lambda(t)x$.  The equation $D\cos(\omega nh)+L = D+L$ implies that the solution of $\dot{x}=\lambda(t)x$ using $\mathcal{M}$ with step-size $h > 0$ is the same as the numerical solution of the ODE $\dot{x} = (D+L)x$.  The quantity $h(D+L) \notin \mathcal{S} \cup \partial \mathcal{S}$ since $h(D+L) < \delta /2$.  Therefore the numerical solution of $\dot{x}=\lambda(t)x$ by $\mathcal{M}$ using the step-size $h> 0$ becomes unbounded as $n \rightarrow \infty$.
\end{proof}
The geometric idea behind the proof of Theorem \ref{thm:counterexample} is that if the step-size is too large, then $h \lambda(nh+t_0)$ may be outside the classical stability domain too often and destabilize the numerical solution.  It seems impossible to devise general algebraic conditions on the method coefficients of \eqref{eq:glm} guaranteeing that the numerical solution of any uniformly exponentially stable test equation of the form \eqref{eq:testeq2} decays.  However, if $h\lambda(t)$ is in the linear stability region of \eqref{eq:glm} on average, then we can use the standard linear stability region techniques in an approximate sense as we now show.  For each $n \geq 0$ we have an associated mean test equations
\begin{equation}\label{eq:meantesteq}
\dot{w}_n = \xi(n;h)w_n, \quad \xi(n;h) = \frac{1}{h}\int_{nh}^{(n+1)h}\lambda(\tau)d\tau.
\end{equation}
For $n \geq 0$ the exact solutions of \eqref{eq:testeq2} and \eqref{eq:meantesteq} at $t = (n+1)h$ using the same initial condition $x(t_n)=x_n$ agree and are given by
$$w_n((n+1)h) = x((n+1)h) = \exp\left(\int_{nh}^{(n+1)h}\lambda(\tau)d\tau\right)x_n.$$
Assume that $h > 0$ is so small that an underlying one-step method of \eqref{eq:glm} exists.  The numerical solution of \eqref{eq:testeq2} by \eqref{eq:glm} with step-size $h > 0$ is given by $x_{n+1} = \Phi^{\lambda}(n;h)x_n$.  For $n \geq 0$ applying the underlying one-step method to compute one forward step of the numerical solution of \eqref{eq:meantesteq} with initial condition $w_n(nh)=x_n$ is given by $\Phi^{\xi(n;h)}(h)x_n$.  Assuming that \eqref{eq:glm} has local truncation error of order $p \geq 1$, it follows that there exists $h^* > 0$ so that if $h \in (0,h^*]$, then 
$$\Phi^{\lambda}(n;h) = \Phi^{\xi(n;h)}(h) + \mathcal{O}(h^{p+1}).$$
Thus, the nonautonomous stability of the test equation \eqref{eq:testeq2} over the interval $[nh,(n+1)h]$ is approximately determined by the time average \eqref{eq:meantesteq} of $\lambda(t)$ on $[nh,(n+1)h]$.  If $h\xi(n;h)$ is in the interior of linear stability region of the \eqref{eq:glm} for all sufficiently small $h > 0$, then the sequence $\{\Phi^{\xi(n;h)}(h)\}_{n=0}^{\infty}$ is power bounded (see Definition 2.1 of \cite{Butcher1987b}) and it follows that $\{\Phi^{\lambda}(n;h)\}_{n=0}^{\infty}$ is power bounded for all sufficiently small $h > 0$.  Hence, we can repeat the analysis of Runge-Kutta methods in Theorem 3.6 of \cite{SVV1} and in an approximate sense determine the stability of \eqref{eq:glm} applied to solve \eqref{eq:testeq2} from the application of the underlying one-step method to solve mean test equations of the form \eqref{eq:meantesteq}.

%A one-step method is said to be autonomous if whenever it is used to solve an autonomous ODE $\dot{x}= f(x)$ with constant step-size $h > 0$ the numerical solution is given by an autonomous difference equation $x_{n+1} = theta(x_n;h,f)$.  

%The numerical solution of \eqref{eq:testeq2} by a one-step method  $\mathcal{M}'$ using step-size $h > 0$ and initial value $x(t_0) = x_0$ takes the form $x_{n+1} = \Phi^{\lambda}(n;h)x_n$.  For
%If the one-step method $\mathcal{M}'$ is autonomous, then the numerical solution of the mean test problem by $\mathcal{M}'$ with step-size $h > 0$ takes the form 
%$$x_{k+1} = \theta(xi(n;h),h)x_k, k > 0$$
%In \cite{SVV1} it is shown in Theorem 3.5 that for an autonomous one-step method, 

%The stability 

%Analagous to the proof of Corollary 2.4 in \cite{Stoffer1993}, we can 

%  It is challenging to extend the technique developed in this section in a straightforward way to uniformly exponentially stable solutions of nonlinear ODE IVPs satisfying the hypotheses of Theorem \ref{thm:maintheorem1}.  The main barrier for extending our techniques is that Theorem \ref{thm:maintheorem2} only shows exponential stability (and not uniform exponential stability) of the solution of the GLM applied to solve \eqref{eq:lin}. 

% It is also hard to use the structure of the underlying one-step method in this analysis since it is unlikely that the underlying one-step method is a Runge-Kutta or other classical one-step method and indeed (see \cite{EN1988}) such abstractly defined methods can be 'quite exotic'.  

As a follow-up to our linear stability theory we prove the following proposition for nonlinear initial value problems. From this proposition it follows that the conclusion of Theorem \ref{thm:onestep_nonlin} holds for underlying one-step methods and thus for the approximation generated by a strictly stable GLM with suitably chosen starting and finishing procedures.
\begin{proposition}\label{thm:nonlinapprox}
Consider a GLM \eqref{eq:glm} that is strictly stable and has local truncation error of order $p \geq 1$.  Then for any underlying one-step method $y_{n+1} = H_1(y_n,\varphi(y_n,0,h),h)$ there exists a starting procedure $\overline{\mathcal{S}}_h$, a finishing procedure $\overline{\mathcal{F}}_h$, and an $h^* > 0$ so that if $h \in (0,h^*)$, then the output of the GLM is defined by the map $X_{n+1} = F(X_n,n,h)$ and for any $x \in \mathbb{R}^d$ and $n \geq 0$ we have
\begin{equation}\label{eq:nonlinapprox.0}
H_1(x,\varphi(x,0,h),n,h) = \overline{\mathcal{F}}_h(F(\overline{\mathcal{S}}_h(x),n,h)).
\end{equation}
\end{proposition}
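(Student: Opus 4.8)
The plan is to transfer the starting and finishing procedures $\mathcal{S}_h^*$ and $\mathcal{F}_h^*$ already constructed in Conclusion 4 of Theorem \ref{thm:maintheorem1} from the $Y$-coordinates back to the original $X$-coordinates by conjugating with the linear change of variables $P \otimes I_d$. All of the analytic content—the existence of the map $F$ with $X_{n+1}=F(X_n,n,h)$, the invariant-manifold function $\varphi$, the underlying one-step method $H_1$, and the intertwining identity $H_1(x,\varphi(x,0,h),n,h)=\mathcal{F}_h^*(H(\mathcal{S}_h^*(x),n,h))$—is already available, so the argument should reduce to tracking the Kronecker factors and verifying that they cancel.

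First I would fix $h^*>0$ small enough that all four conclusions of Theorem \ref{thm:maintheorem1} hold for $h\in(0,h^*)$. In particular Conclusion 1 supplies the map $F$, while Conclusions 2 and 4 supply $\varphi$, the underlying one-step method $H_1$, the starting procedure $\mathcal{S}_h^*(x)=(x^T,\varphi(x,0,h)^T)^T$, and the projection $\mathcal{F}_h^*(y^1,y^2)=y^1$. I would also recall the defining relation $H(Y,n,h)=(P^{-1}\otimes I_d)F((P\otimes I_d)Y,n,h)$ introduced in the statement of Theorem \ref{thm:maintheorem1}.

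Next I would define the candidate procedures by pre- and post-composing with the change of variables,
$$\overline{\mathcal{S}}_h(x):=(P\otimes I_d)\mathcal{S}_h^*(x),\qquad \overline{\mathcal{F}}_h(X):=\mathcal{F}_h^*\bigl((P^{-1}\otimes I_d)X\bigr),$$
so that $\overline{\mathcal{S}}_h:\mathbb{R}^d\to\mathbb{R}^{dk}$ and $\overline{\mathcal{F}}_h:\mathbb{R}^{dk}\to\mathbb{R}^d$. To confirm that these form a legitimate starting/finishing pair I would verify $\overline{\mathcal{F}}_h\circ\overline{\mathcal{S}}_h=\text{id}$: using $(P^{-1}\otimes I_d)(P\otimes I_d)=I_{dk}$ gives $\overline{\mathcal{F}}_h(\overline{\mathcal{S}}_h(x))=\mathcal{F}_h^*(\mathcal{S}_h^*(x))=\mathcal{F}_h^*(x^T,\varphi(x,0,h)^T)^T=x$.

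Finally I would establish \eqref{eq:nonlinapprox.0} by direct substitution. Inserting the definitions yields
$$\overline{\mathcal{F}}_h(F(\overline{\mathcal{S}}_h(x),n,h))=\mathcal{F}_h^*\bigl((P^{-1}\otimes I_d)F((P\otimes I_d)\mathcal{S}_h^*(x),n,h)\bigr),$$
and the bracketed expression is exactly $H(\mathcal{S}_h^*(x),n,h)$ by the defining relation for $H$; applying the intertwining identity from Conclusion 4 then produces $H_1(x,\varphi(x,0,h),n,h)$, as required. I expect no genuine analytic obstacle here—the statement is essentially a corollary of Theorem \ref{thm:maintheorem1}—so the only point demanding care is the correct placement and cancellation of the factors $P\otimes I_d$ and $P^{-1}\otimes I_d$, which is the single step where an ordering slip could occur.
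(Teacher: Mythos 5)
Your proposal is correct and follows essentially the same route as the paper's own proof: both define $\overline{\mathcal{S}}_h = (P\otimes I_d)\mathcal{S}_h^*$ and $\overline{\mathcal{F}}_h = \mathcal{F}_h^*\circ(P^{-1}\otimes I_d)$ and then combine the defining relation $H(Y,n,h)=(P^{-1}\otimes I_d)F((P\otimes I_d)Y,n,h)$ with the intertwining identity of Conclusion 4 of Theorem \ref{thm:maintheorem1}. Your explicit verification that $\overline{\mathcal{F}}_h\circ\overline{\mathcal{S}}_h=\mathrm{id}$ is a detail the paper leaves implicit, and your form of the conjugation relation is the correct one from the theorem statement (the paper's proof contains a typo there, writing $P^{-1}\otimes I_d$ in both factors).
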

\begin{proof}
Let $h^* > 0$ be such that if $h \in (0,h^*)$, then the conclusions of Theorem \ref{thm:maintheorem1} hold.  If $h \in (0,h^*)$, then the first conclusion implies that the output of the GLM satisfies $X_{n+1} = F(X_n,n,h)$.  Let $\overline{\mathcal{S}}_h := (P \otimes I_d)S_h^*$ and $\overline{\mathcal{F}}_h := \mathcal{F}_h^* \circ (P^{-1} \otimes I_d)$ where $P$ is as defined in Section \ref{sec:mainresults1}.  Then \eqref{eq:nonlinapprox.0} follows by combining $H(Y,n,h) = (P^{-1} \otimes I_d)F((P^{-1}\otimes I_d)Y,n,h)$ with the fourth conclusion of Theorem \ref{thm:maintheorem1}.
\end{proof}
In general we will not have explicit formulas for the starting procedure in Proposition \ref{thm:nonlinapprox} since it is defined in terms of the map $\varphi$.  However, as shown in the proof of Theorem 2.3 in \cite{Stoffer1993}, if we have a starting procedure $\mathcal{S}_h$ relative to which a strictly stable GLM has local truncation error of order $p$, then we can show that $\mathcal{S}^*_h = \mathcal{S}_h + \mathcal{O}(h^{p+1})$.  This is sufficient for the output of a GLM to be (non-uniformly) exponentially attracted to a uniformly exponentially stable trajectory since the one-step method is exponentially attractive.

\section{Experiments}\label{sec:experiments}

In this section we develop a stability diagnostic for strictly stable GLMs solving time-dependent linear ODEs based upon the QR approximation theory for the Lyapunov spectrum and Theorems \ref{thm:maintheorem1} and \ref{thm:maintheorem2}.  As shown in Section \ref{sec:introduction} and Theorem \ref{thm:counterexample}, the AN-stability of BDF2 does not guarantee that there is no stability induced step-size restriction when solving time-dependent problems.  

We first show how to evaluate the underlying one-step method of a strictly stable GLM indirectly.  Theorem \ref{thm:maintheorem2} implies that the exponential stability of a strictly stable GLM solving \eqref{eq:2dlin} can be characterized by the Lyapunov or Sacker-Sell spectrum of an underlying one-step method 
\begin{equation}\label{eq:onestepnew}
y_{n+1} = H_1(y_n,n,h) \equiv \Phi^A(n;h)y_n
\end{equation}
Rather than attempting to directly evaluate the function $H_1$ we instead make use of \eqref{eq:maintheorem1est} to evaluate $H_1$ approximately.  Let $X_n:=X(n;X_0,t_0,h)$ denote the output of $\mathcal{M}$ applied to solve \eqref{eq:lininhomo2} using step-size $h > 0$, initial value $X_0 \in \mathbb{R}^{dk}$, and initial time $t_0 > s$ and express $X(n;X_0,t_0,h)=((x_n^1)^T,\hdots,(x_n^k)^T)^T$.  For the sequence defined by $Y_n = (P^{-1} \otimes I_d)X_n$ with $Y_n = ((y_n^1)^T,\hdots,(y_n^k)^T)^T$ there exists $G > 0$, $\gamma \in (0,1)$, and $Z_n$ of the form $Z_n = (z_n,\varphi(z_n,0,h))^T$, where $\varphi$ is as defined in Theorem \ref{thm:maintheorem1},  so that if $n \geq 0$, then 
\begin{equation}\label{eq:uniformdecay}
\|Y_n - Z_n \| \leq G \gamma^{n}
\end{equation}
and $z_{n+1} = H_1(z_n,n,h)$.  If we let $P^{-1} = (\overline{p}_{i,j})_{i,j=1}^{k}$, then if follows from \eqref{eq:uniformdecay} that the sequence defined component-wise as $w_n := \sum_{j=1}^{k}\overline{p}_{1,j} x_n^j$ is approximately equal to an output of \eqref{eq:onestepnew} for sufficiently large values of $n \geq 0$.  

We use this technique to approximate the largest discrete Lyapunov exponent of \eqref{eq:onestepnew} as follows.  Given an initial condition $x(0)=x_0$ we use the RK4 Runge-Kutta method to compute $x_1$.  For $n \geq 2$, we solve the BDF2 equation \eqref{eq:2dlin} for $x_{n+2}$ and set $X_n=(x_n^T,x_{n+1}^T)^T$.  Using $X_n$, we form $w_n = \sum_{j=1}^{3}\overline{p}_{1,j} x_{n+j-1}$.  Since $w_n$ approximately satisfies \eqref{eq:onestepnew} we can view it as the first column in a fundamental matrix solution.  Suppose that we let $w_n = Q_n R_n$ be a QR factorization where $Q_n \in \mathbb{R}^{d\times 1}$ is orthogonal and $R_n \in \mathbb{R}^{1\times 1}$.  Under the assumption that \eqref{eq:onestepnew} has a discrete integral separation structure, the largest discrete Lyapunov exponent $\mu_{\text{max}}$ of \eqref{eq:onestepnew} is almost surely (see \cite{DVV95} and also \cite{EckemannRuelle} and \cite{JPS1987}) given by
\begin{equation}\label{eq:lle}
\mu_{\text{max}} = \limsup_{n \rightarrow \infty}\dfrac{1}{t_n-t_0}\sum_{j=0}^{n} \ln((R_j)_{1,1})
\end{equation}
where $(R_n)_{1,1}$ denotes the $(1,1)$ entry of $R_n$.  We estimate \eqref{eq:lle} as
\begin{equation}\label{eq:lleappr}
\mu_{\text{appr}}(N_0,N) = \text{max}_{N_0 \leq n \leq N_0 + N}\dfrac{1}{t_n-t_0}\sum_{j=N_0}^{n} \ln((R_j)_{1,1}).
\end{equation}
We approximate the largest discrete Lyapunov exponent $\mu_{\text{max}}$ of \eqref{eq:onestepnew} by \eqref{eq:lleappr} using and use the sign of $\mu_{\text{appr}}(N_0,N)$ for large values of $N_0$ and $N$ as a stability diagnostic for the numerical solution of \eqref{eq:2dlin} by \eqref{eq:bdf2}.  Note that conclusion 2 of Theorem \ref{thm:onesteptheorem} and the fact that $x(0)=x_0 = (1,0)^T$ implies that almost surely we have
$$\mu_{\text{appr}}(N_0,N) = \frac{a_1}{h(N-N_0)}(\cos(Nh)-\cos(N_0 h)) + b_1 +\mathcal{O}(h^{2})$$
so that $\mu_{\text{appr}}(N_0,N) \approx b_1 +\mathcal{O}(h^2)$ as $N-N_0 \rightarrow \infty$ .

\begin{table}[h!]
    \begin{tabular}{ | l | l | l | l | l|}
    \hline
    $h$ & LTEmean & LTEmax & $\mu_{\text{appr}}(N_f/2,N_f/2)$   \\  \hline\hline
     $7.5E-1$ & $1.37E10$ & $1.51E11$ & $7.68E-1$  \\\hline
     $7.5E-2$ & $3.75E-3$  & $9.42E-3$  & $9.03E-3$    \\\hline
     $7.5E-3$ & $3.60E-7$ &  $6.38E-4$ & $-9.70E-2$     \\\hline
     $7.5E-4$ & $1.95E-9$  & $6.24E-5$  & $-9.04E-2$    \\\hline
    \end{tabular}
\caption{Results of an experiment for the solution of \protect\eqref{eq:2dlin} using BDF2, $a_1=a_2=1.2$, $b_1 = -0.14$, $b_2=-0.15$, $\beta=10.0$, $\omega = 1$, and a final time of $t_f = 40$ for various step-sizes $h$ and the initial condition $x(0)=(1,0)^T$.  LTEmean is the mean local truncation error, LTEmax is the maximum local truncation error, and $\ensuremath{\mu_{\text{appr}}(N_f/2,N_f/2)}$ is the value of \protect\eqref{eq:lleappr} where $N_f$ is the final step of the approximation. }
\label{table}
\end{table}

\begin{figure}
\hspace{-0.20in}
\hbox{\includegraphics[scale=0.44]{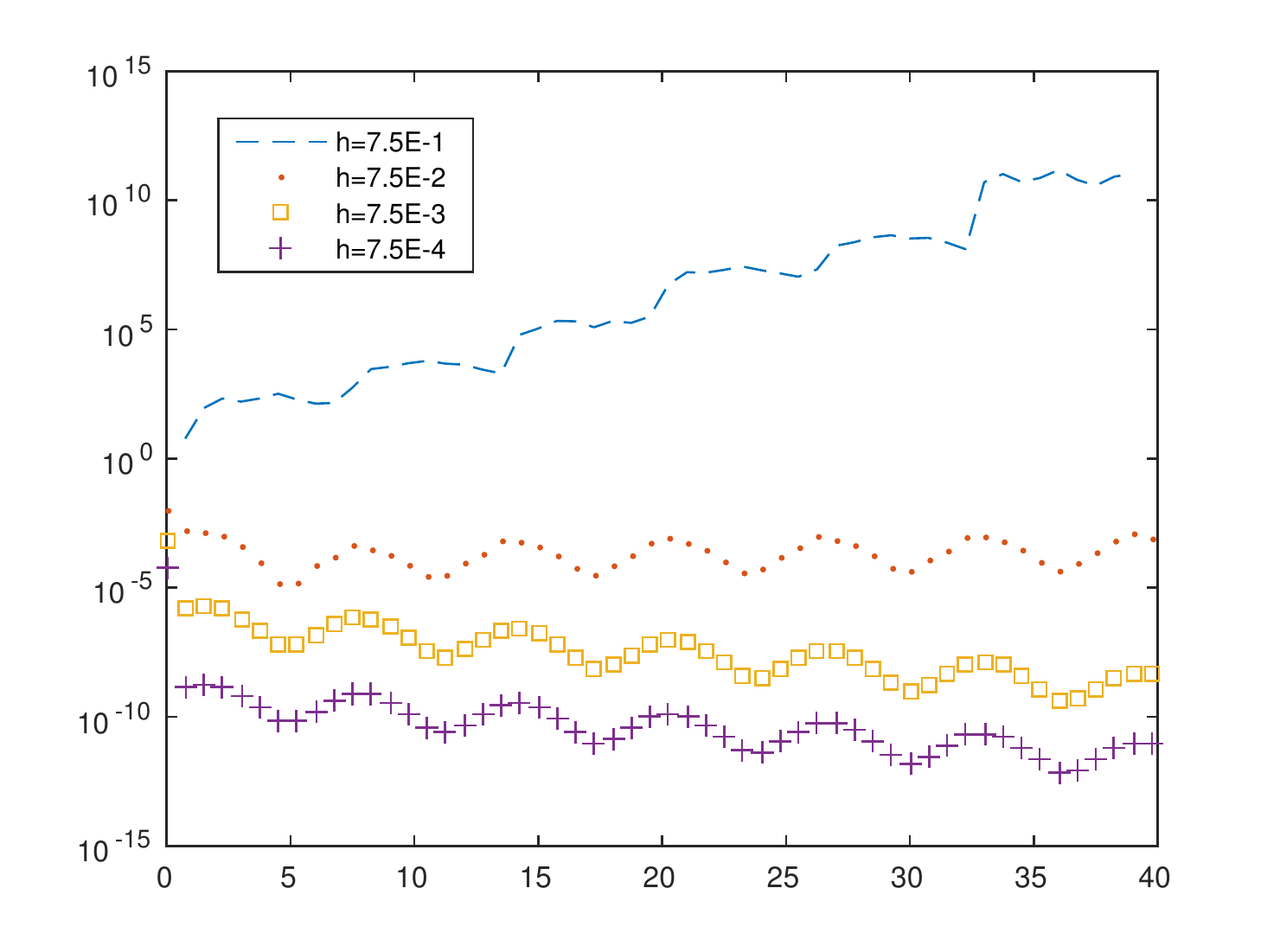}\includegraphics[scale=0.44]{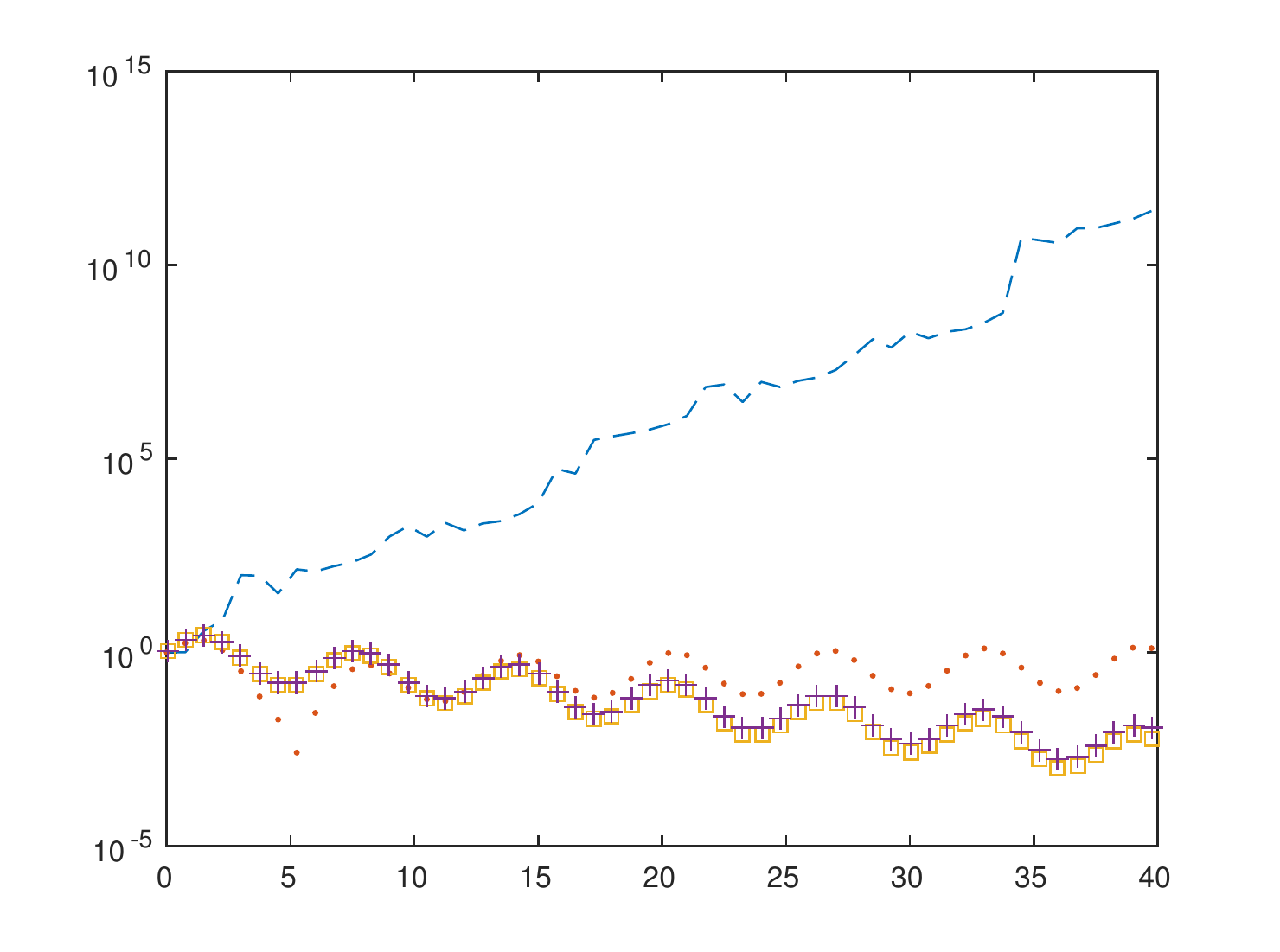}}
\caption{Left: Logarithmic plot of the 2-norm of the local truncation error of the numerical solution versus time for various values of $h$.  Right: Logarithmic plot of the 2-norm of the numerical solution versus time for various values of $h$.  The parameter values used were $a_1=a_2=1.2$, $b_1 = -0.14$, $b_2=-0.15$, $\beta=10.0$, $\omega = 1$ with a final time of $t_f = 40$ and the initial condition $x(0)=(1,0)^T$.  }
\label{figure}
\end{figure}

We display the results of our first experiment in Table \ref{table} and Figure \ref{figure}.  For step-sizes $h=7.5\cdot 10^{-1},7.5 \cdot 10^{-2}$ the method \eqref{eq:bdf2} produces numerical solutions to \eqref{eq:2dlin} that are growing in norm with approximate largest discrete Lyapunov exponents that are positive.  When $h=7.5 \cdot 10^{-2}$ the local trunation error, which is gradually increasing as shown in Figure \ref{figure}, remains bounded by $10^{-2}$.  When $h =7.5 \cdot 10^{-3}, 7.5 \cdot 10^{-4}$ the method \eqref{eq:bdf2} produces a decaying solution to \eqref{eq:2dlin} and the approximate largest discrete Lyapunov exponent of \eqref{eq:onestepnew} is negative.  This experiment shows that monitoring the approximate largest discrete Lyapunov exponent of the one-step method \eqref{eq:onestepnew} can be a more effective tool for controlling the global error and monitoring stability than the local truncation error.
\begin{table}[ht!]
%\hspace{.75in}
    \begin{tabular}{ | l | l | l | l | l| l|}
    \hline
    $a_1=a_2=a$ & LTEmean & LTEmax & $\mu_{\text{appr}}(N_f/2,N_f/2)$ & $\tau_{\text{max}}$    \\  \hline\hline
     $1.15$ & $5.50E-5$ & $4.38E-3$ & $-2.33E-2$ & $1.068$ \\\hline
     $1.45$ & $1.18E-4$  & $5.02E-3$  & $-1.69E-3$ & $1.086$    \\\hline
     $1.75$ & $2.88E-4$ &  $5.70E-3$ & $1.78E-2$   & $1.11$  \\\hline
     $2.05$ & $7.96E-4$  & $6.4E-3$  & $3.64E-2$  & $1.23$ \\\hline
    \end{tabular}
\caption{Results of an experiment for the solution of \protect\eqref{eq:2dlin} using BDF2, using $b_1 = -0.5$, $b_2=-.055$, $\beta=1.0$, $\omega = 1$, and a final time of $t_f = 100$ for various values of $a=a_1=a_2$ using the step-sizes $h=0.05$ and the initial condition $x(0)=(1,0)^T$.  LTEmean is the mean local truncation error, LTEmax is the maximum local truncation error, $\ensuremath{\mu_{\text{appr}}(N_f/2,N_f/2)}$ is the value of \protect\eqref{eq:lleappr} where $N_f$ is the final step of the approximation, and $\ensuremath{\tau_{\text{max}}}$ is the maximum value of $\tau_n$ which denotes the quotient of the local truncation error at time-steps $n+1$ and $n$.  }
\label{table2}
\end{table}

\begin{figure}
\hspace{-.2in}
\includegraphics[scale=0.44]{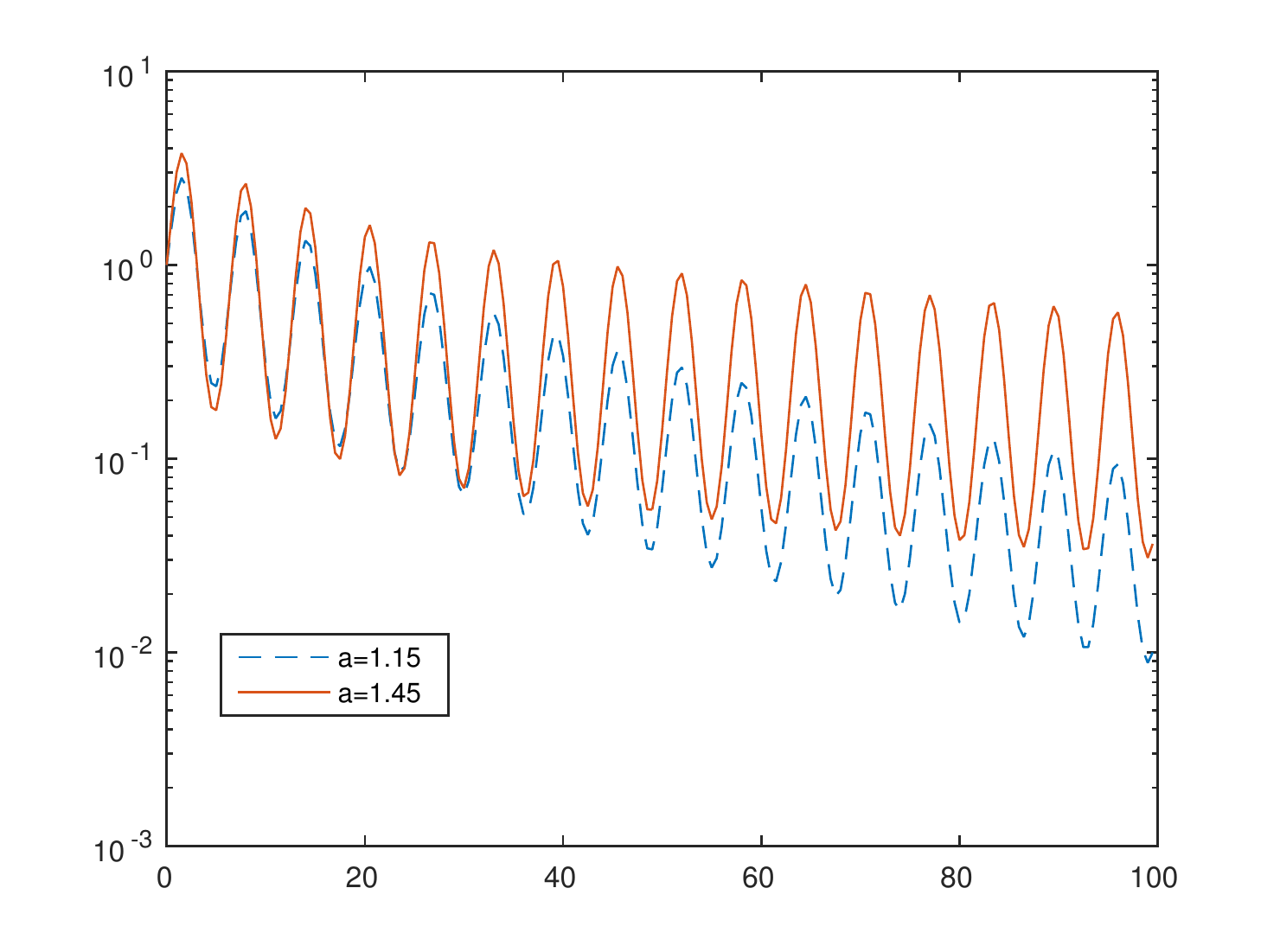}\includegraphics[scale=0.44]{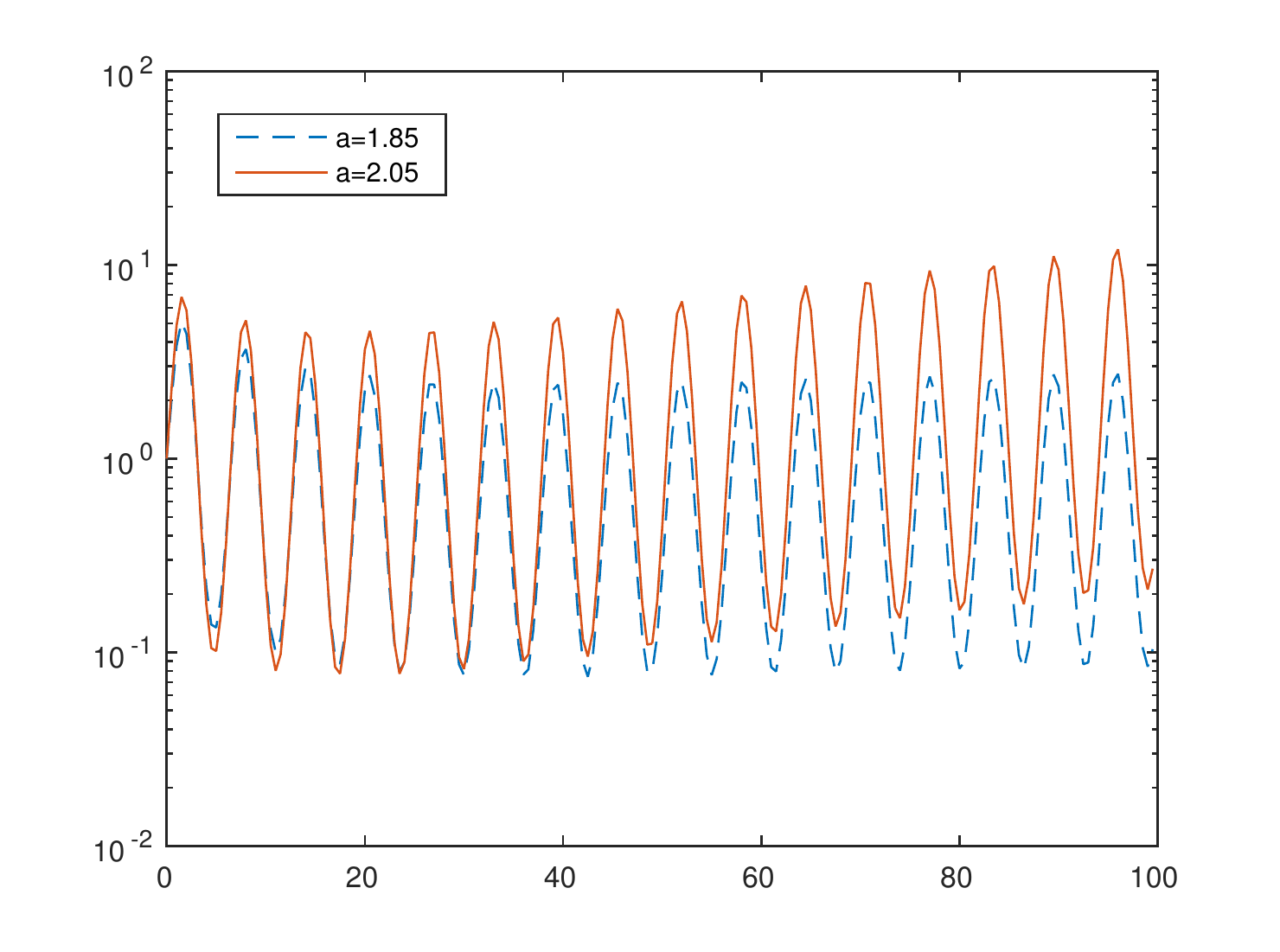}
\caption{Left: Logarithmic plot of the 2-norm of the local truncation error of the numerical solution versus time for various values of $h$.  Right: Logarithmic plot of the 2-norm of the numerical solution versus time for various values of $h$.  The parameter values used were using $b_1 = -0.5$, $b_2=-.055$, $\beta=1.0$, $\omega = 1$, and a final time of $t_f = 100$ for various values of $a=a_1=a_2$ using the step-sizes $h=0.05$ and the initial condition $x(0)=(1,0)^T$.  }
\label{figure2}
\end{figure}

In Table \ref{table2} and Figure \ref{figure2} we display the results of our second experiment.  The results of this experiment are meant to illustrate the difficulty in detecting stability using only point-wise values of the local truncation error.  We see that there are no spikes in the local truncation error from one step to the next since $\tau_{\text{max}}$ is approximately $1$ for all values of $a=a_1=a_2$.  Additionally, as the parameter $a$ varies from $1.45$ to $1.75$, the numerical solution becomes unstable and the ratio between the mean and maximum 2-norm of the local truncation error is $2.44$ and $1.14$ respectively which are comparable in value to the corresponding ratios when the parameter $a$ varies from $1.15$ to $1.45$ where there is no loss of stability.  This experiment demonstrates that the point-wise local truncation error and its local variation can fail to detect a loss of time-dependent stability.

\section{Conclusion}\label{sec:conclusion}
In this work we have used invariant manifold theory for nonautonomous difference equations to show that a strictly stable GLM solving a nonautonomous ODE that satisfies a global Lipschitz condition has an underlying one-step method whenever the step-size is sufficiently small.  This result combined with the Lyapunov and Sacker-Sell spectral stability theory for one-step methods developed in \cite{SVV2016,SVV1} and \cite{steyerthesis} is applied to analyze the stability of a strictly stable GLM solving a nonautonomous linear ODE.  These theoretical results are then applied to show that sign of the approximate largest discrete Lyapunov exponent of the underlying one-step method of a strictly stable GLM can be a more robust tool than the point-wise values of the local truncation error for monitoring the stability (and hence global error) of the numerical solution of a nonautonomous linear ODE IVP.  

Most step-size selection strategies for the solution of ODE IVPs select step-size based mainly on the local accuracy of the method, which we have shown in Section \ref{sec:experiments} can cause a solver to produce an exponentially growing approximation to an exponentially contracting nonautonomous linear ODE, even if the method is AN-stable.  Our experimental results suggest that the nonautonomous stability theory for GLMs that we have developed can be a useful tool for step-size selection based on stability as well as accuracy (a practical step-size selection algorithm for explicit Runge-Kutta methods based on these ideas can be found in \cite{SVV2016}).  In future work it remains to show that our results can be extended to variable step-size and variable order GLMs. Interestingly, whereas we have used our nonautonomous results as a practical way of detecting (and hence correcting) an unstable numerical solution, in the abstract of \cite{K1986} it is stated that "...this result is of theoretical interest; it does not seem to affect the significance of multi-step methods for practical computations".  The results of the present paper that build upon the fundamental ideas in \cite{K1986} and \cite{EN1988} serve as yet another example of how mathematics that is considered theoretical and abstract can potentially find a practical application.\\

\textbf{Acknowledgments} The authors would like to extend thanks to the referees for their helpful comments.

% You may incorporate your references as follows in your main tex file.
% Using BibTex is not recommended but can be handled.

\medskip
% The data information below will be filled by AIMS editorial staff
Received xxxx 20xx; revised xxxx 20xx.
\medskip

\end{document}